\documentclass{article}

\PassOptionsToPackage{numbers, compress}{natbib}
\usepackage[preprint]{neurips_2026}

\usepackage[utf8]{inputenc} 
\usepackage[T1]{fontenc}    
\usepackage{hyperref}       
\usepackage{url}            
\usepackage{booktabs}       
\usepackage{amsfonts}       
\usepackage{nicefrac}       
\usepackage{microtype}      
\usepackage{xcolor}         

\usepackage{amsmath,amssymb,bm,mathtools}
\usepackage{amsthm}
\usepackage{multirow}
\usepackage{graphicx}

\graphicspath{{fig/}}
\providecommand{\E}{\mathbb{E}}
\providecommand{\I}{I}
\newcommand{\norm}[1]{\left\lVert #1 \right\rVert_2}

\theoremstyle{plain}
\newtheorem{proposition}{Proposition}[section]
\theoremstyle{remark}

\title{Neural Preconditioned Born Series: A Metric-Matched Framework for Learning-based Preconditioners}

\author{
  Juntao Wang \\
  School of Mathematics, Jilin University\\
  Changchun 130012, China \\
  Shenzhen Loop Area Institute\\ 
  Shenzhen 518038, China \\
  \And
  Jiwei Jia\thanks{Corresponding authors: \texttt{jiajiwei@jlu.edu.cn} (Jiwei Jia); \texttt{xinliang.liu@ouc.edu.cn} (Xinliang Liu).} \\
  School of Mathematics, Jilin University\\ 
  Changchun 130012, China \\
  Shenzhen Loop Area Institute\\ 
  Shenzhen 518038, China \\
  \And
  Xinliang Liu\textsuperscript{*} \\
  Ocean University of China\\ 
  Qingdao 266100, China
}

\begin{document}

\maketitle

\begin{abstract}
High-frequency Helmholtz problems in heterogeneous media remain challenging for both classical iterative methods and end-to-end neural PDE solvers.
We propose Neural Preconditioned Born Series (NPBS), a learned iterative preconditioning framework that operates in preconditioned residual coordinates induced by the Convergent Born Series (CBS).
Existing learned Born-series methods primarily use Born-style unrolling for forward wavefield prediction, while learned Helmholtz preconditioners are usually formulated in physical residual coordinates.
NPBS fills this gap by recasting Born-series iteration as shifted-Laplacian left preconditioning, and replacing the CBS preconditioner with a learned residual-to-correction map in the Born-preconditioned coordinates.
The left preconditioner further induces a residual metric, which yields a metric-matched training objective that aligns optimization with the preconditioned geometry used at inference.
On heterogeneous Helmholtz benchmarks, metric-matched NPBS reduces iteration counts by up to $1.9\times$ over direct residual learning, with gains increasing from $1.2\times$ to $1.9\times$ as the wavenumber rises.
Compared to classical CBS, learned NPBS reduces stationary iteration counts by over $20\times$; when used as a preconditioner for FGMRES, it further achieves the lowest wall-clock time among all evaluated methods.
The same metric-matched formulation also improves convergence on convection--diffusion--reaction systems and Newton linear systems for nonlinear PDEs, indicating that residual-metric matching is a general design principle for neural preconditioners.
\end{abstract}

\section{Introduction}
Efficient solution of wave equations in heterogeneous media is central to a broad range of engineering and scientific applications, from seismic inversion and medical ultrasound imaging to electromagnetic scattering and photonic design. Many of these problems reduce to the heterogeneous Helmholtz equation, whose high-wavenumber and high-contrast regimes remain challenging for both classical and learning-based solvers~\cite{gander2019class}.
The main obstacles are well known: standard discretizations suffer from pollution errors~\cite{babuska1997pollution}, multilevel methods face coarse-grid dispersion mismatch~\cite{stolk2016dispersion}, and indefiniteness can undermine relaxation and coarse correction~\cite{ernst2011difficult}. Classical approaches address these issues through shifted-Laplacian preconditioning~\cite{erlangga2006novel,erlangga2004class}, wave-ray multigrid~\cite{brandt1997wave}, Convergent Born Series (CBS)~\cite{osnabrugge2016convergent}, and domain decomposition~\cite{chen2013source,graham2020domain}. Despite these advances, robust and efficient solution of heterogeneous Helmholtz problems remains difficult.

Learning-based PDE solvers have developed rapidly in recent years. Physics-informed neural networks (PINNs)~\cite{raissi2019physics} encode governing equations and boundary conditions directly into the training objective, but often require a separate optimization for each new instance, limiting cross-instance generalization and amortized efficiency~\cite{sun2020surrogate}. Neural operators address this limitation by learning mappings between function spaces, allowing one model to solve families of PDEs parameterized by coefficients, sources, or geometries. Representative works include DeepONet~\cite{lu2021learning}, FNO~\cite{li2020fourier,li2023fourier}, UNO~\cite{rahman2022u}, and MgNO~\cite{he2024mgno}, among many others~\cite{kovachki2023neural,cao2021choose,MELCHERS2026118893}.
While neural operators greatly improve scalability, end-to-end operator learning remains difficult for highly oscillatory solutions. Because of the spectral bias of neural networks~\cite{rahaman2019spectral,liu2024mitigating}, high-frequency components are typically much harder to learn accurately than low-frequency ones. For high-frequency Helmholtz equations, this makes one-shot wavefield prediction particularly challenging~\cite{he2025self}.

A promising alternative is to embed a learned module inside an iterative solver, using the network as a residual-to-correction map rather than as a direct solution predictor~\cite{kopanivcakova2025leveraging,rudikov2024neural,zhang2024blending}. Recent learned iterative solvers have shown that this strategy can reduce iteration counts or wall-clock time relative to classical baselines~\cite{xie2025mgcfnn}, making this direction especially promising for challenging Helmholtz problems.
Our work follows this direction, but asks a more specific question: what residual coordinates and training metric should a neural preconditioner use for ill-conditioned Helmholtz problems?

We answer this question by proposing \emph{Neural Preconditioned Born Series} (NPBS), a learned iterative preconditioning framework inspired by the CBS.
Our key observation is that the Born series is algebraically equivalent to shifted-Laplacian left preconditioning; in particular, the Born residual coincides with the residual of the left-preconditioned system.
This equivalence identifies the Born-preconditioned residual coordinates as a natural state space for learned iterative correction.
NPBS builds on this view by replacing the CBS preconditioner with a neural residual-to-correction map acting on Born-preconditioned residuals.
The same equivalence also determines how the model should be trained.
If inference is performed in Born-preconditioned coordinates, then the training loss should measure residual error in the geometry induced by the same preconditioner, rather than in the unpreconditioned Euclidean metric.
Let $G_\eta$ denote the Green operator of the damped reference operator.
We introduce the residual metric induced by $G_\eta$:
\begin{equation}
\|\bm r\|_{R_\eta} := \|G_\eta \bm r\|_2,
\qquad
R_\eta := G_\eta^\ast G_\eta,
\end{equation}
and derive a metric-matched Born-series objective $\mathcal{L}_{\mathrm{bs}}^{R_\eta}$ that aligns training and inference within the same preconditioned geometry.
This alignment is especially important for indefinite Helmholtz problems, where different modes can be amplified highly nonuniformly near resonance~\cite{ernst2011difficult,erlangga2004class}.
Although developed first for Helmholtz equations, NPBS also extends naturally to convection--diffusion--reaction (CDR) equations and Newton linear systems for nonlinear PDEs~\cite{hao2024newton,lee2025neural}.

Our contributions are summarized as follows:
\begin{enumerate}
\item \textbf{A unified learned iterative framework in Born-preconditioned coordinates.}
We introduce NPBS, which lifts the diagonal CBS preconditioner to an architecture-agnostic learned residual-to-correction map operating on Born-preconditioned residuals.

\item \textbf{A metric-matched training principle induced by the reference Green operator.}
By identifying Born residuals with shifted-Laplacian left-preconditioned residuals, we derive the induced metric $R_\eta = G_\eta^\ast G_\eta$ and the objective $\mathcal{L}_{\mathrm{bs}}^{R_\eta}$, making residual-metric matching the principle that aligns training with preconditioned inference.

\item \textbf{Robustness to ill-conditioning and algorithmic generalization.}
On heterogeneous Helmholtz benchmarks, metric-matched NPBS reduces iteration counts by up to $1.9\times$ over direct residual learning, with gains growing from $1.2\times$ to $1.9\times$ as the wavenumber rises, at only $0.7\%$ additional training cost. The advantage persists across four neural backbones; when used as a preconditioner inside FGMRES, NPBS achieves the lowest average wall-clock time among all evaluated methods. The same design principle also improves convergence on CDR equations and Newton linear systems for nonlinear PDEs.
\end{enumerate}

The paper is organized as follows. Section~\ref{sec:related_work} positions NPBS relative to neural preconditioners and Born-inspired neural solvers. Section~\ref{sec:born_identity} establishes the algebraic equivalence between Born-series residuals and shifted-Laplacian left preconditioning. Section~\ref{sec:npbs} introduces the NPBS iteration and derives the metric-matched training objective. Section~\ref{sec:numerical_results} reports experiments on heterogeneous Helmholtz benchmarks across difficulty levels and backbone architectures, with extensions to CDR and nonlinear PDEs.

\section{Related Work}
\label{sec:related_work}

\paragraph{Neural preconditioners and learned iterative solvers.}
Recent learning-based solvers increasingly use neural networks as components of an iterative numerical method rather than as standalone solution predictors. Examples include learned multigrid and meta-multigrid schemes~\cite{greenfeld2019learning,chen2022meta}, neural-operator-accelerated Krylov methods~\cite{rudikov2024neural,kopanivcakova2025leveraging}, hybrids of neural operators and relaxation methods~\cite{zhang2024blending,lee2025hybrid}, and neural preconditioned Newton-type solvers~\cite{lee2025neural}. For Helmholtz equations, learned preconditioning has been studied through multigrid-augmented deep preconditioners~\cite{azulay2022multigrid,lerer2024multigrid}, Wave-ADR-NS~\cite{cui2025neural}, MGCFNN~\cite{xie2025mgcfnn}, and many others~\cite{stanziola2021helmholtz,lee2025fast}. These methods demonstrate that learned solver components can reduce iteration counts or improve amortized solve time, but they typically formulate the learned correction in the physical residual coordinates of $A\bm u=\bm f$ or focus on architecture design for approximating the inverse/preconditioner. 
In contrast, NPBS aligns its training metric with the preconditioned coordinates used at inference, rather than designing a new architecture or optimizing in unpreconditioned coordinates. This makes the formulation backbone-agnostic: the same metric-matched objective (Section~\ref{sec:npbs}) applies regardless of the correction network.

\paragraph{Born-inspired neural solvers.}
The classical Born series/Lippmann--Schwinger formulations express wave scattering through a reference Green operator and a medium perturbation; CBS~\cite{osnabrugge2016convergent} modifies this construction to obtain a convergent fixed-point method for heterogeneous Helmholtz problems. Recent learned Born-series methods, such as Neural Born Series Operator (NBSO)~\cite{zeng2023neural} and Learned Born Series (LBS)~\cite{stanziola2024learned}, use Born-style unrolling to build forward wavefield simulators for scattering media. Their central use of the Born structure is therefore as a learned finite-depth surrogate for the wavefield map. NPBS reuses the same Born-series structure not as a finite-depth forward surrogate, but as the preconditioned coordinate system for a convergent iterative solver (Section~\ref{sec:npbs}).
Consequently, NPBS can converge to the PDE solution under a verifiable residual criterion and can be embedded as a preconditioner within Krylov subspace methods (e.g.\ FGMRES), rather than serving only as a fixed-depth forward predictor.

\section{From Born Series to Preconditioned Coordinates}
\label{sec:born_identity}
\subsection{Operator Splitting and Integral Equation}
We consider the Helmholtz equation for wave scattering in an unbounded heterogeneous medium, subject to the Sommerfeld radiation condition:
\begin{equation}\label{eq:helm}
\begin{aligned}
    \mathcal{L}u(x) := -\Delta u(x) - k(x)^2 u(x) &= f(x), \quad x \in \mathbb{R}^{d},\\
    \lim_{|x| \to \infty} |x|^{\frac{d-1}{2}} \left( \frac{\partial u}{\partial |x|} - i k_0 u \right) &= 0,
\end{aligned}
\end{equation}
where $u$ is the wavefield, $f$ the source, $d$ the spatial dimension, and $k(x)=\omega/c(x)$ the spatially varying wavenumber ($\omega$: angular frequency, $c(x)$: speed of sound). The constant $k_0$ is the background wavenumber ($k(x)\to k_0$ as $|x|\to\infty$).

For computation, the unbounded domain is truncated to a bounded domain $\Omega$, and absorbing boundary layers---sponge layers~\cite{israeli1981approximation} or PML~\cite{berenger1994perfectly,stanziola2021helmholtz}---are incorporated so that the resulting bounded-domain operator (still denoted $\mathcal{L}$) suppresses spurious reflections. After discretization (finite differences or FFT-based pseudospectral methods), we obtain the linear system
\begin{equation}
A \bm u = \bm f,
\end{equation}
where $A$ is the discrete Helmholtz operator (including absorbing-layer terms), $\bm u$ the discrete unknown, and $\bm f$ the discrete source.
Introduce a damped reference operator
\begin{equation}
\mathcal{L}_\eta := -\Delta - (k_0^2 + i\eta), \qquad \eta>0,
\end{equation}
used only for preconditioning/training coordinates. The reference is chosen to be transform-diagonalizable (periodic FFT or Dirichlet/Neumann DST/DCT) for fast application; any mismatch between this reference and the physical operator, including absorbing-layer terms, is absorbed into the perturbation
\begin{equation}
\mathcal{V}_\eta := \mathcal{L}_\eta - \mathcal{L}.
\end{equation}

Let $g_\eta$ be the Green kernel for the \emph{continuous} reference operator $\mathcal{L}_\eta$ under the chosen boundary condition on $\Omega$. Define the associated continuous Green operator
\begin{equation}
(\mathcal{G}_\eta q)(x) := \int_{\Omega} g_\eta(x,y)\,q(y)\,\mathrm{d}y,
\qquad q\in L^2(\Omega),
\label{eq:G_conv}
\end{equation}
so that $\mathcal{G}_\eta = \mathcal{L}_\eta^{-1}$ on the admissible function class.

\paragraph{Discrete implementation.}
In the discrete setting, let $L_\eta$ denote the matrix obtained by discretizing $\mathcal{L}_\eta$ (e.g., via finite differences or Fourier pseudospectral methods; see Appendix~\ref{app:fft_ps} for details), and define the discrete Green operator $G_\eta := L_\eta^{-1}$. For transform-diagonalizable references (periodic FFT or Dirichlet/Neumann DST/DCT), $G_\eta$ can be applied in $\mathcal{O}(N\log N)$ operations (Appendix~\ref{app:fft_verify_all}).

\paragraph{Operator splitting and Lippmann--Schwinger form.}
With $\mathcal{V}_\eta := \mathcal{L}_\eta - \mathcal{L}$, the heterogeneous Helmholtz operator splits as
\begin{equation}
\mathcal{L} = \mathcal{L}_\eta - \mathcal{V}_\eta.
\label{eq:A_split}
\end{equation}
For the coefficient-only Helmholtz split, $\mathcal{V}_\eta$ reduces to multiplication by $k(x)^2-k_0^2-i\eta$.

In the continuous setting, $\mathcal{L}u=f$ implies
\begin{equation}
\mathcal{L}_\eta u = \mathcal{V}_\eta u + f,
\qquad
u = \mathcal{G}_\eta(\mathcal{V}_\eta u + f),
\end{equation}
equivalently,
\begin{equation}
(\mathcal{I} - \mathcal{G}_\eta \mathcal{V}_\eta)u = \mathcal{G}_\eta f.
\end{equation}
In the discrete setting, starting from $A\bm u=\bm f$ and the discrete split $A=L_\eta - V_\eta$, we obtain
\begin{equation}
(I - G_\eta V_\eta)\bm u = G_\eta \bm f.
\label{eq:LS_discrete}
\end{equation}
Equation \eqref{eq:LS_discrete} is the preconditioned operator equation used by CBS~\cite{osnabrugge2016convergent}, whose fixed-point iteration is
\begin{equation}
\bm u^{m+1}
=
\bm u^m + \gamma\!\left(G_\eta(V_\eta \bm u^m + \bm f)-\bm u^m\right),
\label{eq:CBS}
\end{equation}
where $\gamma$ is a spatially varying diagonal preconditioner derived from the perturbation $V_\eta$.

\subsection{Equivalence to Shifted-Laplacian Left Preconditioning}
We state the equivalence in discrete notation and use it to define the preconditioned learning coordinates and the induced residual metric. The accompanying spectral bounds offer quantitative intuition for why the preconditioned coordinates are favorable.

\begin{proposition}[Left-preconditioned equivalence and spectral bounds]
\label{prop:equivalence_spectral}
Let the discrete Helmholtz operator admit the split $A=L_\eta - V_\eta$, and assume $L_\eta$ is invertible so that $G_\eta := L_\eta^{-1}$ is well-defined.
Then
\begin{equation}
I - G_\eta V_\eta = G_\eta A = L_\eta^{-1}A.
\label{eq:key_identity}
\end{equation}
Hence the Lippmann--Schwinger equation \eqref{eq:LS_discrete} is algebraically identical to the preconditioned system
\begin{equation}
L_\eta^{-1}A\,\bm u = L_\eta^{-1}\bm f.
\end{equation}
Moreover, if $\|G_\eta V_\eta\|_2\le q<1$, then
\begin{equation}
\sigma(I-G_\eta V_\eta)\subset\{z\in\mathbb{C}:|z-1|\le q\},
\label{eq:disk_bound}
\end{equation}
and
\begin{equation}
\kappa_2(I-G_\eta V_\eta)\le \frac{1+q}{1-q}.
\label{eq:kappa_bound}
\end{equation}
\end{proposition}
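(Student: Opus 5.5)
The plan has three parts: the identity, the spectral localization, and the condition-number bound.

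\emph{Identity.} This is pure algebra. Substituting $V_\eta = L_\eta - A$ from the split gives
\[
G_\eta V_\eta = L_\eta^{-1}(L_\eta - A) = I - L_\eta^{-1}A,
\]
and rearranging yields \eqref{eq:key_identity}. Applying $G_\eta$ to both sides of $A\bm u=\bm f$ then shows that \eqref{eq:LS_discrete} reads $L_\eta^{-1}A\bm u = L_\eta^{-1}\bm f$. Since $L_\eta^{-1}$ is invertible, this is equivalent to the original system with the same solution set.

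\emph{Disk bound \eqref{eq:disk_bound}.} Take an eigenpair $(I-G_\eta V_\eta)\bm x = z\bm x$ with $\bm x\neq 0$. Then $G_\eta V_\eta \bm x = (1-z)\bm x$. Taking $2$-norms gives
\[
|1-z|\,\|\bm x\|_2 \le \|G_\eta V_\eta\|_2\|\bm x\|_2 \le q\|\bm x\|_2,
\]
so $|z-1|\le q$. Equivalently, the spectral radius of $G_\eta V_\eta$ is bounded by its operator norm.

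\emph{Condition number \eqref{eq:kappa_bound}.} Write $M = I - E$ with $E := G_\eta V_\eta$ and $\|E\|_2\le q$. The upper bound is immediate from the triangle inequality: $\|M\|_2 \le 1 + q$. For the inverse, $\|E\|_2<1$ makes the Neumann series $\sum_{j\ge0}E^j$ converge. It equals $M^{-1}$, and bounding it termwise gives $\|M^{-1}\|_2 \le \sum_j q^j = 1/(1-q)$. Multiplying the two bounds gives $\kappa_2(M)\le (1+q)/(1-q)$.

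Equivalently, one can bound the singular values directly: $\sigma_{\min}(M)\ge 1-q$ and $\sigma_{\max}(M)\le 1+q$.

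The only step needing care is the inverse bound. In particular, $M$ is generally non-normal, so the disk bound on eigenvalues cannot be used to bound $\kappa_2$. The argument must instead go through the operator norm, via the Neumann series or $\|M\bm x\|_2\ge (1-q)\|\bm x\|_2$. That is exactly why the hypothesis is stated on $\|G_\eta V_\eta\|_2$ rather than on its spectral radius.
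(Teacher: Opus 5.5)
Your proposal is correct and follows essentially the same route as the paper: the identity by direct algebra from $G_\eta L_\eta = I$ and the split, the disk bound from the spectral radius of $G_\eta V_\eta$ being at most $\|G_\eta V_\eta\|_2$, and the condition-number bound from $\|I-G_\eta V_\eta\|_2\le 1+q$ together with the Neumann-series estimate $\|(I-G_\eta V_\eta)^{-1}\|_2\le 1/(1-q)$. Your remark that non-normality forces the argument through operator norms rather than the eigenvalue disk is correct and makes explicit a point the paper leaves implicit.
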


The proof is given in Appendix~\ref{app:proof_equivalence_spectral}.

The identity \eqref{eq:key_identity} places the integral-equation formulation and the shifted-Laplacian preconditioning viewpoint in the same algebraic framework. This equivalence is exploited in two ways below: it supplies the preconditioned coordinates in which NPBS iterates (Section~\ref{sec:npbs_iter}), and it motivates the residual metric $R_\eta = G_\eta^\ast G_\eta$ used in the training loss $\mathcal{L}_{\mathrm{bs}}^{R_\eta}$ (Section~\ref{sec:bs_loss}). The spectral bounds \eqref{eq:disk_bound}--\eqref{eq:kappa_bound} quantify the benefit: whereas $A$ is indefinite, the preconditioned operator $G_\eta A$ has spectrum clustered in a disk of radius $q$ centered at $1$ and condition number at most $(1+q)/(1-q)$, reducing the complexity of the inverse map the network must learn. The formal bounds assume $q<1$; in practice, shifted-Laplacian preconditioning improves spectral concentration even beyond this regime~\cite{erlangga2006novel}, and the neural iteration exploits this structure without relying on the bound for convergence.

\section{Neural Preconditioned Born Series Framework}
\label{sec:npbs}

A common learned linear-solver baseline uses a neural correction map $\mathcal{M}_\theta$ driven by the \emph{unpreconditioned} residual. (Since the learned map depends on the PDE coefficients $a$, it should formally be written $\mathcal{M}_\theta(\,\cdot\,;a)$; for brevity we write $\mathcal{M}_\theta(\,\cdot\,)$ below. The specific instantiation is given in Appendix~\ref{app:architectures}.)
Given the discrete system $A\bm u=\bm f$, define the residual $\bm r^m := \bm f - A\bm u^m$ and iterate
\begin{equation}
\bm u^{m+1} = \bm u^m + \mathcal{M}_\theta(\bm r^m)
= \bm u^m + \mathcal{M}_\theta(\bm f-A\bm u^m),
\label{eq:direct_iter}
\end{equation}
which operates in Euclidean residual coordinates.

A standard training objective for $\mathcal{M}_\theta$ is to minimize the relative residual of the learned inverse action on random probes. Let $\mathcal{D}$ denote a probe distribution (e.g., $\mathcal{N}(0,I)$ or residual-replay samples). Then:
\begin{equation}
\mathcal{L}_{\mathrm{dir}}(\theta)
=
\E_{\bm r\sim\mathcal{D}}
\frac{\|A\,\mathcal{M}_\theta(\bm r)-\bm r\|_2}{\|\bm r\|_2}.
\label{eq:L_dir}
\end{equation}
This objective is simple and widely used~\cite{cui2025neural,xie2025mgcfnn}, but it optimizes the residual in \emph{unpreconditioned} Euclidean geometry; for indefinite Helmholtz operators, near-resonant modes can yield stiff gradients and slow learning.

\subsection{Neural Preconditioned Born Series (NPBS) iteration}
\label{sec:npbs_iter}

Proposition~\ref{prop:equivalence_spectral} shows that the Lippmann--Schwinger/CBS operator corresponds to shifted-Laplacian left preconditioning.
CBS uses a diagonal (pointwise) preconditioner $\gamma$ in these preconditioned coordinates; NPBS replaces $\gamma$ with a learned operator $\mathcal{M}_\theta$ acting in the \emph{same} coordinates.

\paragraph{Efficient integral-form implementation.}
\label{par:efficient_integral}
Define the Born fixed-point map $\bm u \mapsto G_\eta(V_\eta \bm u + \bm f)$.
At iterate $u^m$, NPBS updates via
\begin{equation}
\bm u^{m+1}
=
\bm u^m + \mathcal{M}_\theta\!\left(G_\eta(V_\eta \bm u^m + \bm f)-\bm u^m\right).
\label{eq:npbs_integral}
\end{equation}
Here $V_\eta \bm u^m$ is pointwise multiplication and $G_\eta$ is applied once (e.g., via FFT for transform-diagonalizable references), so each iteration requires only a single transform pair for the preconditioned residual.
If $\mathcal{M}_\theta=\gamma$, \eqref{eq:npbs_integral} reduces to CBS (\eqref{eq:CBS}).
If $\mathcal{M}_\theta = (I-G_\eta V_\eta)^{-1}$, then one NPBS step recovers the exact Helmholtz solution.

\paragraph{Equivalent left-preconditioned form.}
Using the key identity \eqref{eq:key_identity}, \eqref{eq:npbs_integral} is equivalently
\begin{equation}
\bm u^{m+1}=\bm u^m+\mathcal{M}_\theta\!\left(G_\eta(\bm f-A\bm u^m)\right),
\label{eq:npbs_iter}
\end{equation}
which makes the left-preconditioned interpretation explicit: the network input is
$\bm r_{\mathrm{bs}}^m := G_\eta(\bm f-A\bm u^m)=L_\eta^{-1}(\bm f-A\bm u^m)$.
Computing $A\bm u^m$ may require an additional application of the discrete Laplacian (e.g., an extra FFT in pseudospectral settings), hence we implement the efficient integral form \eqref{eq:npbs_integral} in practice. Proposition~\ref{prop:equivalence_spectral} implies that the preconditioned coordinates enjoy spectral clustering when $\|G_\eta V_\eta\|<1$, justifying their use for both iteration and training.

\subsection{Metric-matched training within the NPBS framework}
\label{sec:bs_loss}

We begin by analyzing the operator targets implied by the training objectives. The direct loss \eqref{eq:L_dir} trains $\mathcal{M}_\theta$ to approximate $A^{-1}$ in unpreconditioned coordinates. In contrast, the NPBS iteration \eqref{eq:npbs_integral} evaluates the network in preconditioned coordinates. Here, the effective target is $\mathcal{M}_\theta\approx (I-G_\eta V_\eta)^{-1}$ (or equivalently, $\mathcal{M}_\theta\approx (G_\eta A)^{-1}$ via \eqref{eq:key_identity}). Consequently, a model trained exclusively with \eqref{eq:L_dir} lacks alignment with the NPBS operator structure.

To resolve this misalignment, an intermediate approach views the composition $\mathcal{M}_\theta G_\eta$ as an approximation of $A^{-1}$. Adopting this within the standard residual training paradigm \eqref{eq:L_dir} yields
\begin{equation}
\mathcal{L}_{\mathrm{bs}}^{\ell_2}(\theta)
=
\E_{\bm r\sim\mathcal{D}}
\frac{
\left\lVert A\mathcal{M}_\theta(G_\eta \bm r)-\bm r\right\rVert_{2}
}{
\left\lVert \bm r\right\rVert_{2}
}.
\label{eq:L_bs_l2}
\end{equation}
Although \eqref{eq:L_bs_l2} successfully evaluates the network on the preconditioned inputs $G_\eta \bm r$, it measures the residual in the original Euclidean geometry associated with $A\bm u=\bm f$. Thus, it fails to capture the residual geometry of the left-preconditioned system $G_\eta A \bm u = G_\eta \bm f$. Measuring the residual after left preconditioning, and incorporating the identity \eqref{eq:key_identity}, we propose the metric-matched objective:
\begin{equation}
\mathcal{L}_{\mathrm{bs}}^{R_\eta}(\theta)
=
\E_{\bm r\sim\mathcal{D}}
\frac{
\left\|
(I-G_\eta V_\eta)\,\mathcal{M}_\theta(G_\eta \bm r)-G_\eta \bm r
\right\|_2
}{
\|G_\eta \bm r\|_2
}.
\label{eq:L_bs_Reta_integral}
\end{equation}
In \eqref{eq:L_bs_Reta_integral}, we represent the preconditioned operator as $I-G_\eta V_\eta$ rather than $G_\eta A$. This computationally efficient form requires only a single transform application, mirroring the practical implementation of the NPBS iteration \eqref{eq:npbs_integral}. Unlike \eqref{eq:L_bs_l2}, this proposed objective thoroughly aligns both the input representation and the residual metric with the underlying preconditioned system.

\begin{proposition}[Riesz-map equivalence]
\label{prop:riesz_map}
To make the geometry underlying \eqref{eq:L_bs_Reta_integral} explicit, define the metric induced by the reference Green operator:
\begin{equation}
R_\eta:=G_\eta^\ast G_\eta,\qquad
\|\bm x\|_{R_\eta}:=\|G_\eta \bm x\|_2,\qquad
\langle \bm x,\bm y\rangle_{R_\eta}:=\langle \bm x,R_\eta \bm y\rangle_2.
\end{equation}
Then \eqref{eq:L_bs_Reta_integral} is equivalently
\begin{equation}
\mathcal{L}_{\mathrm{bs}}^{R_\eta}(\theta)
=
\E_{\bm r\sim\mathcal{D}}
\frac{
\left\lVert A\mathcal{M}_\theta(G_\eta \bm r)-\bm r\right\rVert_{R_\eta}
}{
\left\lVert \bm r\right\rVert_{R_\eta}
}.
\label{eq:L_bs_Reta_riesz}
\end{equation}
\end{proposition}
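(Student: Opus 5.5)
The plan is to verify the equivalence pointwise, for each fixed probe $\bm r$, by rewriting numerator and denominator of \eqref{eq:L_bs_Reta_riesz} through the definition $\|\bm x\|_{R_\eta}=\|G_\eta\bm x\|_2$ and then applying the key identity \eqref{eq:key_identity} from Proposition~\ref{prop:equivalence_spectral}. Equality of the integrands for every $\bm r$ then gives equality of the expectations.

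First I will check that $R_\eta$ really defines a norm and inner product. Since $L_\eta$ is assumed invertible, $G_\eta=L_\eta^{-1}$ is injective, so $R_\eta=G_\eta^\ast G_\eta$ is Hermitian positive definite. Moreover $\langle \bm x,\bm x\rangle_{R_\eta}=\langle G_\eta\bm x,G_\eta\bm x\rangle_2=\|G_\eta\bm x\|_2^2$, so the two definitions of $\|\cdot\|_{R_\eta}$ agree. Injectivity also gives $\|\bm r\|_{R_\eta}>0$ whenever $\bm r\neq 0$, so both ratios are well defined on the same probes. Zero probes occur with probability zero under $\mathcal{N}(0,I)$ and are excluded in any case, exactly as in \eqref{eq:L_dir}.

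Next, set $\bm v:=\mathcal{M}_\theta(G_\eta\bm r)$. The numerator of \eqref{eq:L_bs_Reta_riesz} is
\[
\|A\bm v-\bm r\|_{R_\eta}=\|G_\eta A\bm v-G_\eta\bm r\|_2 .
\]
By \eqref{eq:key_identity}, $G_\eta A=I-G_\eta V_\eta$, so this equals $\|(I-G_\eta V_\eta)\bm v-G_\eta\bm r\|_2$, which is the numerator of \eqref{eq:L_bs_Reta_integral}. The denominator is $\|\bm r\|_{R_\eta}=\|G_\eta\bm r\|_2$ by definition. This matches \eqref{eq:L_bs_Reta_integral} term by term.

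There is no real obstacle here; the argument is one line of linearity plus the identity. The only care needed is to note that $\mathcal{M}_\theta$ may be nonlinear. This does not matter, because $G_\eta$ is linear and is applied only to the already-computed vector $A\bm v-\bm r$, so the splitting $G_\eta(A\bm v-\bm r)=G_\eta A\bm v-G_\eta\bm r$ is valid. I will add a remark on the name: $R_\eta$ is the Riesz map of the inner product $\langle\cdot,\cdot\rangle_{R_\eta}$. Consequently, the loss measures the physical residual $A\bm v-\bm r$ in the dual geometry induced by the left preconditioner. That is exactly the Euclidean residual of the preconditioned system $G_\eta A\bm u=G_\eta\bm f$ used at inference in \eqref{eq:npbs_iter}.
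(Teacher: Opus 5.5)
Your proposal is correct and follows essentially the same route as the paper: use the key identity \eqref{eq:key_identity} to trade $I-G_\eta V_\eta$ for $G_\eta A$, then apply the definition $\|\bm x\|_{R_\eta}=\|G_\eta\bm x\|_2$ to the numerator and the denominator. Your additional checks are valid details the paper leaves implicit: that $R_\eta$ is Hermitian positive definite, that the denominators are nonzero, and that a nonlinear $\mathcal{M}_\theta$ is harmless because only the linear $G_\eta$ is distributed.
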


The proof is given in Appendix~\ref{app:proof_riesz_map}.

Proposition~\ref{prop:riesz_map} makes precise that \eqref{eq:L_bs_Reta_integral} and \eqref{eq:L_bs_Reta_riesz} are the same objective written in two equivalent forms. It also clarifies that $\mathcal{L}_{\mathrm{bs}}^{\ell_2}$ and $\mathcal{L}_{\mathrm{bs}}^{R_\eta}$ use the same NPBS-compatible input $G_\eta\bm r$ and the same residual of the original operator, $A\mathcal{M}_\theta(G_\eta\bm r)-\bm r$, but evaluate that residual in different metrics (Euclidean versus $R_\eta$-induced). Thus, relative to \eqref{eq:L_dir}, the progression is: unpreconditioned probes with Euclidean norm ($\mathcal{L}_{\mathrm{dir}}$), then preconditioned probes with Euclidean norm ($\mathcal{L}_{\mathrm{bs}}^{\ell_2}$), and finally preconditioned probes with the $R_\eta$-metric ($\mathcal{L}_{\mathrm{bs}}^{R_\eta}$). A detailed Fourier-domain interpretation of the metric reweighting is given in Appendix~\ref{app:iterp_precond_geometry}.


The main text presents derivations and operator identities for the Helmholtz setting; CDR and Newton extensions are given in Appendix~\ref{app:cdr_nl_full}, fast-transform verification in Appendix~\ref{app:fft_verify_all}, the pseudospectral recipe in Appendix~\ref{app:fft_ps}, and architecture details in Appendix~\ref{app:architectures}.


\section{Experiments}
\label{sec:numerical_results}
We organize the evaluation around the heterogeneous Helmholtz equation. 
(i) we isolate the effects of iteration format and training loss across dataset difficulty and wavenumber regimes. 
(ii) we test whether the resulting gains persist across multiple neural-operator backbones. 
(iii) we benchmark the full NPBS solver against classical CBS and shifted-Laplacian preconditioners, both as stationary iterations and inside FGMRES. 
(iv) we evaluate on convection--diffusion--reaction (CDR) systems and Newton linear systems for nonlinear PDEs to demonstrate generality beyond Helmholtz. 
All methods share the unified stopping criterion $\mathrm{rTol}=\norm{\bm f-A\bm u^m}/\norm{\bm f}$.
The primary learned preconditioner uses an FNO backbone (Appendix~\ref{app:architectures}); dataset splits, training hyperparameters, and compute resources are detailed in Appendices~\ref{app:experimental_setup} and~\ref{app:compute_resources}.

\subsection{Heterogeneous Helmholtz Equation}
\label{sec:exp_heter_helm_eq}
For the heterogeneous Helmholtz equation, we select two datasets from OpenFWI~\cite{deng2022openfwi} with progressive difficulty: CurveVel-A (smooth curved velocity profiles) and CurveFault-B (sharp contrasts and fault structures); see Figure~\ref{fig:openfwi_dataset} for representative samples. Both are discretized on a $256\times256$ grid with 32-cell PML boundaries and $12$ points per wavelength ($\texttt{ppw}$). Full dataset and discretization details are given in Appendix~\ref{app:helmholtz_dataset}.

\subsubsection{Iteration format and loss comparison}
We isolate the effect of the network iteration format and the choice of training loss on solver efficiency. The evaluation compares three configurations under a unified stopping rule ($\text{rTol}=10^{-6}$): (i) Direct iteration trained with $\mathcal{L}_{\mathrm{dir}}$, (ii) NPBS iteration trained with $\mathcal{L}_{\mathrm{bs}}^{\ell_2}$, and (iii) NPBS iteration trained with the metric-matched objective $\mathcal{L}_{\mathrm{bs}}^{R_\eta}$. 
\begin{table}[htbp]
\centering
\caption{Iteration format and loss comparison on OpenFWI datasets. Lower $\texttt{ppw}$ corresponds to higher wavenumber. Ratio: iteration count of NPBS+$\mathcal{L}_{\mathrm{bs}}^{R_\eta}$ vs.\ Direct+$\mathcal{L}_{\mathrm{dir}}$.}
\label{tab:loss_formulation}
\resizebox{\linewidth}{!}{%
\begin{tabular}{llccccccc}
\toprule
\multirow{2}{*}{Dataset} & \multirow{2}{*}{$\texttt{ppw}$} & \multicolumn{2}{c}{Direct + $\mathcal{L}_{\mathrm{dir}}$} & \multicolumn{2}{c}{NPBS + $\mathcal{L}_{\mathrm{bs}}^{\ell_2}$} & \multicolumn{2}{c}{NPBS + $\mathcal{L}_{\mathrm{bs}}^{R_\eta}$} & \multirow{2}{*}{Ratio} \\
\cmidrule(lr){3-4}\cmidrule(lr){5-6}\cmidrule(lr){7-8}
& & Avg. Iters & Avg. Time & Avg. Iters & Avg. Time & Avg. Iters & Avg. Time & \\
\midrule
CurveVel-A & 12 & 491 & 0.499\,s & 466 & 0.470\,s & \textbf{300} & \textbf{0.301\,s} & 1.6 \\
\midrule
\multirow{4}{*}{CurveFault-B}
& 24 &  531 & 0.537\,s &  517 & 0.537\,s & \textbf{458} & \textbf{0.403\,s} & 1.2 \\
& 20 &  616 & 0.642\,s &  539 & 0.550\,s & \textbf{475} & \textbf{0.465\,s} & 1.3 \\
& 16 &  895 & 0.914\,s &  842 & 0.848\,s & \textbf{538} & \textbf{0.544\,s} & 1.6 \\
& 12 & 1051 & 1.030\,s &  859 & 0.838\,s & \textbf{556} & \textbf{0.571\,s} & 1.9 \\
\bottomrule
\end{tabular}%
}
\end{table}

Table~\ref{tab:loss_formulation} reports the effect of iteration scheme and training loss across OpenFWI datasets and wavenumber regimes. 
NPBS~+~$\mathcal{L}_{\mathrm{bs}}^{R_\eta}$ consistently requires the fewest iterations and achieves the lowest runtime; its advantage over Direct~+~$\mathcal{L}_{\mathrm{dir}}$ grows both with stronger heterogeneity, from $1.6\times$ on CurveVel-A to $1.9\times$ on CurveFault-B, and with increasing wavenumber, from $1.2\times$ to $1.9\times$ as $\texttt{ppw}$ decreases from 24 to 12. 
These trends indicate that metric matching is most beneficial in harder Helmholtz regimes, while the runtime gains and the small $0.7\%$ training overhead (Appendix~\ref{app:training_convergence}) corroborate the efficient integral-form design in \S\ref{par:efficient_integral}.

\subsubsection{Cross-backbone robustness of iteration format and loss}
To verify whether the NPBS advantage is architecture-agnostic, we instantiate all three formulations with four backbones: U-Net~\cite{ronneberger2015u}, MgNO~\cite{he2024mgno}, Encoder--solver based on~\cite{lerer2024multigrid}, and MGCFNN~\cite{xie2025mgcfnn}.  
Implementation details and baseline adaptations are described in Appendix~\ref{app:setup_helmholtz}.

Table~\ref{tab:learning_baselines} shows two robust trends. First, for every backbone, NPBS+$\mathcal{L}_{\mathrm{bs}}^{R_\eta}$ yields the lowest iteration count. Second, solve-time ranking differs from iteration ranking: although MGCFNN attains the fewest iterations, MgNO gives the lowest average solve time, reflecting its lightweight design and compile-friendly implementation.

\begin{table}[htbp]
\centering
\caption{Cross-backbone comparison on CurveFault-B with representative learning-based baselines.}
\label{tab:learning_baselines}
\begin{tabular}{lcccccc}
\toprule
\multirow{2}{*}{Backbone} & \multicolumn{2}{c}{Direct + $\mathcal{L}_{\mathrm{dir}}$} & \multicolumn{2}{c}{NPBS + $\mathcal{L}_{\mathrm{bs}}^{\ell_2}$} & \multicolumn{2}{c}{NPBS + $\mathcal{L}_{\mathrm{bs}}^{R_\eta}$} \\
\cmidrule(lr){2-3}\cmidrule(lr){4-5}\cmidrule(lr){6-7}
& Avg. Iters & Avg. Time & Avg. Iters & Avg. Time & Avg. Iters & Avg. Time \\
\midrule
U-Net & 853 & 0.433\,s & 496 & 0.254\,s & 399 & 0.198\,s \\
MgNO & 342 & 0.268\,s & 141 & 0.123\,s & 123 & \textbf{0.107\,s} \\
Encoder--solver & 468 & 0.562\,s & 404 & 0.455\,s & 321 & 0.350\,s \\
MGCFNN & 69 & 0.336\,s & 66 & 0.334\,s & \textbf{48} & 0.235\,s \\
\bottomrule
\end{tabular}
\end{table}

\subsubsection{Comparison with classical iterative methods}
Following the cross-backbone study in Section~\ref{sec:exp_heter_helm_eq}, we select MgNO as the NPBS backbone for its lowest average solve time among the evaluated neural architectures. We compare this instantiation against classical baselines---CBS~\cite{osnabrugge2016convergent} and shifted-Laplacian multigrid~\cite{erlangga2006novel}(SL)---on CurveFault-B, both as stationary iterations and as preconditioners inside FGMRES~\cite{saad1993flexible} with restart length~$50$.

As shown in Table~\ref{tab:helm_cbs_vs_npbs}, NPBS~(MgNO) as a stationary iteration already reduces the average iteration count by $9.1\times$ relative to CBS. Wrapping CBS or SL in FGMRES roughly halves their iteration counts, but the expensive per-step Krylov construction offsets this gain in wall-clock time. In contrast, FGMRES+NPBS~(MgNO) converges in only $37$ iterations with an average time of $0.078$\,s, achieving state-of-the-art performance.
Figure~\ref{fig:openfwi_infer} depicts a representative CurveFault-B sample with its wavefield solution and convergence profiles.

\begin{table}[htbp]
\centering
\caption{Comparison of classical methods and learned NPBS (MgNO backbone) with NPBS + $\mathcal{L}_{\mathrm{bs}}^{R_\eta}$}
\label{tab:helm_cbs_vs_npbs}
\begin{tabular}{lcc}
\toprule
Method & Avg. Iters & Avg. Time \\
\midrule
CBS & 1124 & 0.086\,s \\
FGMRES+CBS & 547 & 1.820\,s \\
FGMRES+SL & 781 & 4.312\,s \\
NPBS (MgNO) & 123 & 0.107\,s \\
FGMRES+NPBS (MgNO) & \textbf{37} & \textbf{0.078}\,s \\
\bottomrule
\end{tabular}%
\end{table}

\begin{figure}[htbp]
    \centering
    \includegraphics[width=\linewidth]{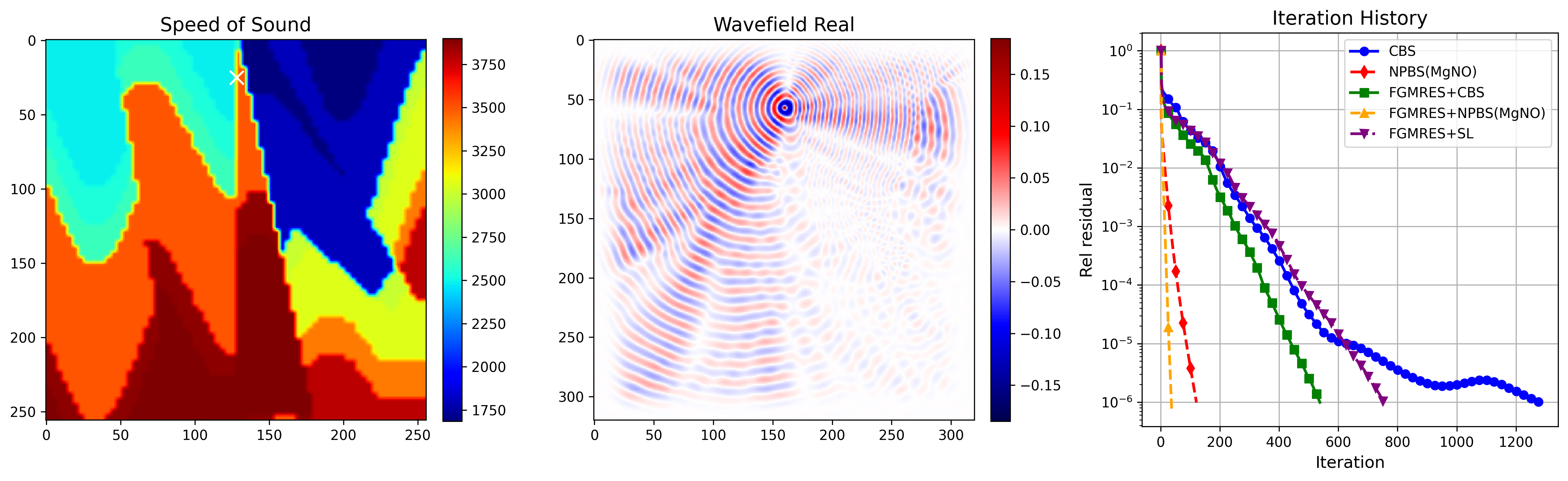}
    \caption{Iteration comparison of different strategies on a representative instance from the OpenFWI CurveFault-B dataset.}
    \label{fig:openfwi_infer}
\end{figure}

\subsection{Algorithmic Generalization Beyond the Helmholtz Equation}
\label{sec:generalization}
To verify that the metric-matched formulation generalizes beyond the Helmholtz equation, we evaluate the same iteration format and loss design on two additional PDE settings: a linear convection--diffusion--reaction (CDR) system and Newton linear systems for a nonlinear PDE. 
Dataset and setup details are in Appendix~\ref{app:setup_generalization} and \ref{app:dataset_figures}.

\paragraph{Linear Convection--Diffusion--Reaction (CDR) Systems}
We consider the linear CDR equation
\begin{equation}
  - \nabla \cdot (\kappa \nabla u) + \mathbf{v}\cdot \nabla u + \sigma u = f
  \quad \text{in } \Omega \subset \mathbb{R}^d,
\end{equation}
posed with periodic boundary conditions. Following the Helmholtz study, we compare three formulations and hold the architecture fixed so that only the iteration format and training objective vary.
We construct a synthetic dataset on a $128\times 128$ periodic grid with spatially heterogeneous coefficients (dataset generation details and representative samples are provided in Appendix~\ref{app:cdr_dataset}).

As shown in Table~\ref{tab:other_pde_solver_cmp} and Figure~\ref{fig:cdr_iter_his}, the advantage of NPBS with $\mathcal{L}_{\mathrm{bs}}^{R_\eta}$ observed on the Helmholtz equation carries over to the CDR setting: the metric-matched formulation consistently achieves the fastest convergence among the three objectives.

\begin{figure}[ht]
    \centering
    \includegraphics[width=0.85\linewidth]{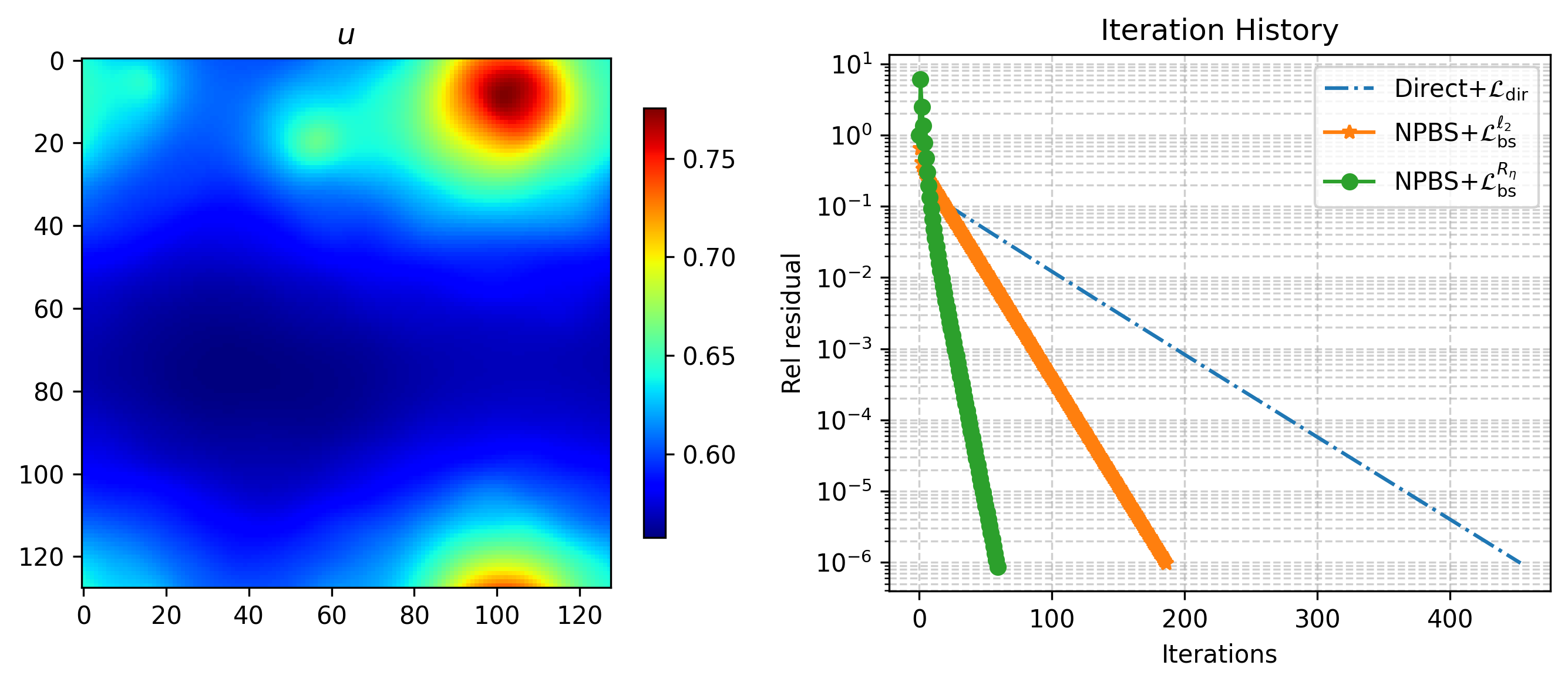}
    \caption{Representative CDR test case. Left: solution field $u$. Right: residual histories.}
    \label{fig:cdr_iter_his}
\end{figure}

\paragraph{Newton Linear Systems for Nonlinear PDEs}
We consider the non-convex problem with multiple solutions studied in~\cite{hao2024newton}:
\begin{equation}
\begin{cases}
  -\Delta u(x,y) - u(x,y)^2 = -s \sin(\pi x)\sin(\pi y), & (x,y) \in \Omega,\\[2mm]
  u(x,y) = 0, & (x,y) \in \partial\Omega,
\end{cases}
\end{equation}
with $f(x,y):=-s\sin(\pi x)\sin(\pi y)$ and define nonlinear residual $\mathcal{F}_{\mathrm{nl}}(u):=-\Delta u-u^2-f$. 
At Newton iterate $u^{(m)}$, the linearized correction $\delta u^{(m)}$ solves
\begin{equation}\label{eq:newton}
\begin{cases}
  -\Delta (\delta u^{(m)}) - 2u^{(m)}\,\delta u^{(m)} = -\mathcal{F}_{\mathrm{nl}}(u^{(m)}), & (x,y) \in \Omega,\\[1mm]
  \delta u^{(m)} = 0, & (x,y) \in \partial\Omega,
\end{cases}
\end{equation}
followed by the update $u^{(m+1)}=u^{(m)}+\delta u^{(m)}$. 
In every Newton step, the learned preconditioner is applied to the inner linear system~\eqref{eq:newton}.
Following~\cite{hao2024newton}, discretization and dataset details are in Appendix~\ref{app:newton_dataset}. For this Newton benchmark, the inner linear solves use $\mathrm{rTol}=10^{-4}$.

Table~\ref{tab:other_pde_solver_cmp} shows that the ranking observed in the linear benchmarks carries over to the Newton inner solves: the metric-matched NPBS requires the fewest iterations.
Figure~\ref{fig:newton_iter} further shows that the learned preconditioner integrates stably within the outer Newton loop.
\begin{figure}[htbp]
    \centering
    \includegraphics[width=0.95\linewidth]{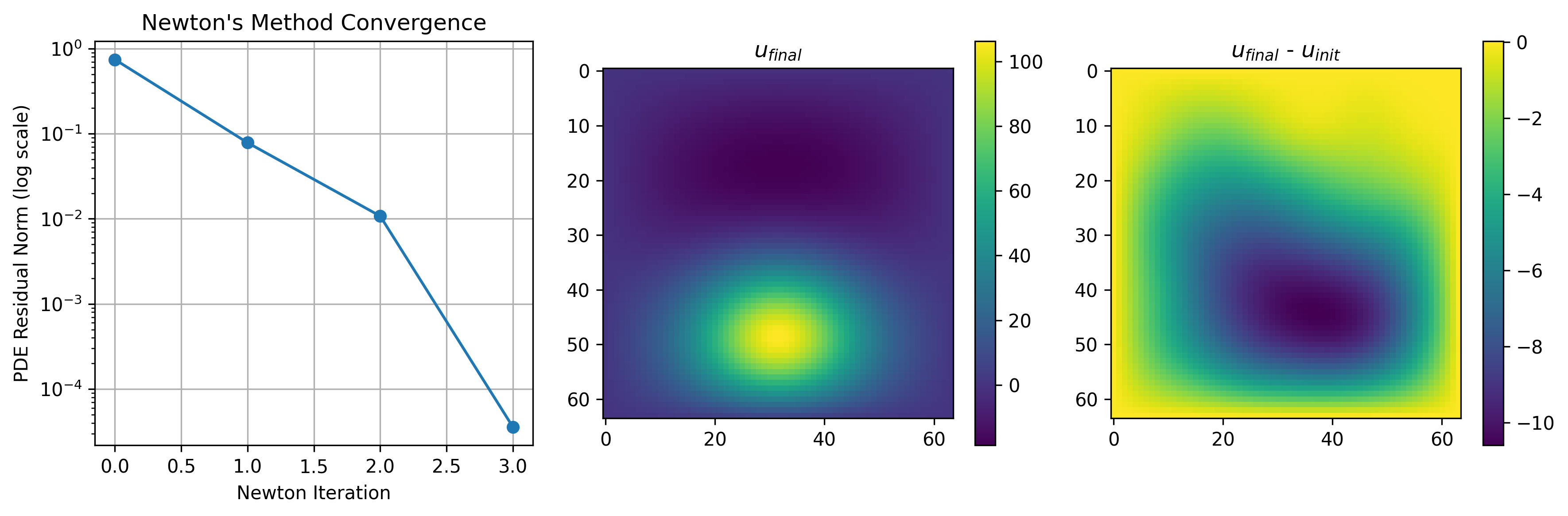}
    \caption{FNO preconditioner with NPBS+$\mathcal{L}_{\mathrm{bs}}^{R_\eta}$ embedded in Newton iterations.}
    \label{fig:newton_iter}
\end{figure}

\begin{table}[htbp]
\centering
\renewcommand{\arraystretch}{1.2}
\caption{Iteration format and loss comparison across other PDEs.}
\label{tab:other_pde_solver_cmp}
\begin{tabular}{l cc cc}
\toprule
\multirow{2}{*}{\textbf{Method}} & \multicolumn{2}{c}{\textbf{CDR Benchmark}} & \multicolumn{2}{c}{\textbf{Newton Inner-Solve}} \\
\cmidrule(lr){2-3} \cmidrule(lr){4-5}
& Avg.\ Iters & Avg.\ Rel Res & Avg.\ Iters & Avg.\ Rel Res \\
\midrule
Direct + $\mathcal{L}_{\mathrm{dir}}$          & 391          & $9.84\times10^{-7}$ & 25          & $8.98\times10^{-5}$ \\
NPBS + $\mathcal{L}_{\mathrm{bs}}^{\ell_2}$    & 167          & $9.57\times10^{-7}$ & 11          & $7.20\times10^{-5}$ \\
NPBS + $\mathcal{L}_{\mathrm{bs}}^{R_\eta}$    & \textbf{96}  & $8.68\times10^{-7}$ & \textbf{9}  & $6.75\times10^{-5}$ \\
\bottomrule
\end{tabular}
\end{table}

\section{Conclusion}
\label{sec:conclusion}
We introduced Neural Preconditioned Born Series (NPBS), a learned iterative preconditioning framework that operates in Born-preconditioned residual coordinates. The identity $I-G_\eta V_\eta = G_\eta A$ shows that these coordinates are exactly those of shifted-Laplacian left preconditioning, and the induced metric $R_\eta=G_\eta^\ast G_\eta$ leads to a metric-matched objective $\mathcal{L}_{\mathrm{bs}}^{R_\eta}$ that aligns training with inference.
Experiments on heterogeneous Helmholtz problems show that metric-matched NPBS consistently reduces iteration counts, with larger gains in more difficult regimes and robustness across neural backbones. When embedded in FGMRES, NPBS achieves the lowest average solve time among the evaluated methods. Results on CDR systems and Newton linear systems for nonlinear PDEs further indicate that residual-metric matching is useful beyond Helmholtz. Future work includes lighter neural preconditioners, sharper rules for choosing the reference operator and damping parameter, and extensions to larger-scale three-dimensional and time-dependent problems.

\bibliographystyle{plainnat}
\bibliography{references}

@article{saad1993flexible,
  title={A flexible inner-outer preconditioned GMRES algorithm},
  author={Saad, Youcef},
  journal={SIAM Journal on Scientific Computing},
  volume={14},
  number={2},
  pages={461--469},
  year={1993},
  publisher={SIAM}
}

@inproceedings{
he2024mgno,
title={Mg{NO}: Efficient Parameterization of Linear Operators via Multigrid},
author={Juncai He and Xinliang Liu and Jinchao Xu},
booktitle={The Twelfth International Conference on Learning Representations},
year={2024},
}

@article{kovachki2023neural,
  title={Neural operator: Learning maps between function spaces with applications to pdes},
  author={Kovachki, Nikola and Li, Zongyi and Liu, Burigede and Azizzadenesheli, Kamyar and Bhattacharya, Kaushik and Stuart, Andrew and Anandkumar, Anima},
  journal={Journal of Machine Learning Research},
  volume={24},
  number={89},
  pages={1--97},
  year={2023}
}

@article{osnabrugge2016convergent,
  title={A convergent Born series for solving the inhomogeneous Helmholtz equation in arbitrarily large media},
  author={Osnabrugge, Gerwin and Leedumrongwatthanakun, Saroch and Vellekoop, Ivo M},
  journal={Journal of computational physics},
  volume={322},
  pages={113--124},
  year={2016},
  publisher={Elsevier}
}

@article{erlangga2004class,
  title={On a class of preconditioners for solving the Helmholtz equation},
  author={Erlangga, Yogi A and Vuik, Cornelis and Oosterlee, Cornelis Willebrordus},
  journal={Applied Numerical Mathematics},
  volume={50},
  number={3-4},
  pages={409--425},
  year={2004},
  publisher={Elsevier}
}

@article{erlangga2006novel,
  title={A novel multigrid based preconditioner for heterogeneous Helmholtz problems},
  author={Erlangga, Yogi A and Oosterlee, Cornelis W and Vuik, Cornelis},
  journal={SIAM Journal on Scientific Computing},
  volume={27},
  number={4},
  pages={1471--1492},
  year={2006},
  publisher={SIAM}
}

@article{chen2013source,
  title={A source transfer domain decomposition method for Helmholtz equations in unbounded domain},
  author={Chen, Zhiming and Xiang, Xueshuang},
  journal={SIAM Journal on Numerical Analysis},
  volume={51},
  number={4},
  pages={2331--2356},
  year={2013},
  publisher={SIAM}
}

@article{ernst2011difficult,
  title={Why it is difficult to solve Helmholtz problems with classical iterative methods},
  author={Ernst, Oliver G and Gander, Martin J},
  journal={Numerical analysis of multiscale problems},
  pages={325--363},
  year={2011},
  publisher={Springer}
}

@article{gander2019class,
  title={A class of iterative solvers for the Helmholtz equation: Factorizations, sweeping preconditioners, source transfer, single layer potentials, polarized traces, and optimized Schwarz methods},
  author={Gander, Martin J and Zhang, Hui},
  journal={Siam Review},
  volume={61},
  number={1},
  pages={3--76},
  year={2019},
  publisher={SIAM}
}

@article{graham2020domain,
  title={Domain decomposition with local impedance conditions for the Helmholtz equation with absorption},
  author={Graham, Ivan G and Spence, Euan A and Zou, Jun},
  journal={SIAM Journal on Numerical Analysis},
  volume={58},
  number={5},
  pages={2515--2543},
  year={2020},
  publisher={SIAM}
}

@article{raissi2019physics,
  title={Physics-informed neural networks: A deep learning framework for solving forward and inverse problems involving nonlinear partial differential equations},
  author={Raissi, Maziar and Perdikaris, Paris and Karniadakis, George E},
  journal={Journal of Computational physics},
  volume={378},
  pages={686--707},
  year={2019},
  publisher={Elsevier}
}

@article{lu2021learning,
  title={Learning nonlinear operators via DeepONet based on the universal approximation theorem of operators},
  author={Lu, Lu and Jin, Pengzhan and Pang, Guofei and Zhang, Zhongqiang and Karniadakis, George Em},
  journal={Nature machine intelligence},
  volume={3},
  number={3},
  pages={218--229},
  year={2021},
  publisher={Nature Publishing Group UK London}
}

@article{li2020fourier,
  title={Fourier neural operator for parametric partial differential equations},
  author={Li, Zongyi and Kovachki, Nikola and Azizzadenesheli, Kamyar and Liu, Burigede and Bhattacharya, Kaushik and Stuart, Andrew and Anandkumar, Anima},
  journal={arXiv preprint arXiv:2010.08895},
  year={2020}
}

@article{li2023fourier,
  title={Fourier neural operator with learned deformations for pdes on general geometries},
  author={Li, Zongyi and Huang, Daniel Zhengyu and Liu, Burigede and Anandkumar, Anima},
  journal={Journal of Machine Learning Research},
  volume={24},
  number={388},
  pages={1--26},
  year={2023}
}

@article{rahman2022u,
  title={U-no: U-shaped neural operators},
  author={Rahman, Md Ashiqur and Ross, Zachary E and Azizzadenesheli, Kamyar},
  journal={arXiv preprint arXiv:2204.11127},
  year={2022}
}

@article{cao2021choose,
  title={Choose a transformer: Fourier or galerkin},
  author={Cao, Shuhao},
  journal={Advances in neural information processing systems},
  volume={34},
  pages={24924--24940},
  year={2021}
}

@article{liu2024mitigating,
  title={Mitigating spectral bias for the multiscale operator learning},
  author={Liu, Xinliang and Xu, Bo and Cao, Shuhao and Zhang, Lei},
  journal={Journal of Computational Physics},
  volume={506},
  pages={112944},
  year={2024},
  publisher={Elsevier}
}

@inproceedings{rahaman2019spectral,
  title={On the spectral bias of neural networks},
  author={Rahaman, Nasim and Baratin, Aristide and Arpit, Devansh and Draxler, Felix and Lin, Min and Hamprecht, Fred and Bengio, Yoshua and Courville, Aaron},
  booktitle={International conference on machine learning},
  pages={5301--5310},
  year={2019},
  organization={PMLR}
}

@inproceedings{xie2025mgcfnn,
  title={MGCFNN: A Neural MultiGrid Solver with Novel Fourier Neural Network for High Wave Number Helmholtz Equations},
  author={Xie, Yan and Lv, Minrui and Zhang, Chen-Song},
  booktitle={The Thirteenth International Conference on Learning Representations},
  year={2025}
}

@inproceedings{greenfeld2019learning,
  title={Learning to optimize multigrid PDE solvers},
  author={Greenfeld, Daniel and Galun, Meirav and Basri, Ronen and Yavneh, Irad and Kimmel, Ron},
  booktitle={International Conference on Machine Learning},
  pages={2415--2423},
  year={2019},
  organization={PMLR}
}

@article{chen2022meta,
  title={Meta-mgnet: Meta multigrid networks for solving parameterized partial differential equations},
  author={Chen, Yuyan and Dong, Bin and Xu, Jinchao},
  journal={Journal of computational physics},
  volume={455},
  pages={110996},
  year={2022},
  publisher={Elsevier}
}

@article{azulay2022multigrid,
  title={Multigrid-augmented deep learning preconditioners for the Helmholtz equation},
  author={Azulay, Yael and Treister, Eran},
  journal={SIAM Journal on Scientific Computing},
  volume={45},
  number={3},
  pages={S127--S151},
  year={2022},
  publisher={SIAM}
}

@article{cui2025neural,
  title={A neural multigrid solver for helmholtz equations with high wavenumber and heterogeneous media},
  author={Cui, Chen and Jiang, Kai and Shu, Shi},
  journal={SIAM Journal on Scientific Computing},
  volume={47},
  number={3},
  pages={C655--C679},
  year={2025},
  publisher={SIAM}
}

@article{zhang2024blending,
  title={Blending neural operators and relaxation methods in PDE numerical solvers},
  author={Zhang, Enrui and Kahana, Adar and Kopani{\v{c}}{\'a}kov{\'a}, Alena and Turkel, Eli and Ranade, Rishikesh and Pathak, Jay and Karniadakis, George Em},
  journal={Nature Machine Intelligence},
  volume={6},
  number={11},
  pages={1303--1313},
  year={2024},
  publisher={Nature Publishing Group UK London}
}

@InProceedings{rudikov2024neural,
  title = 	 {Neural operators meet conjugate gradients: The {FCG}-{NO} method for efficient {PDE} solving},
  author =       {Rudikov, Alexander and Fanaskov, Vladimir and Muravleva, Ekaterina and Laevsky, Yuri M. and Oseledets, Ivan},
  booktitle = 	 {Proceedings of the 41st International Conference on Machine Learning},
  pages = 	 {42766--42782},
  year = 	 {2024},
  editor = 	 {Salakhutdinov, Ruslan and Kolter, Zico and Heller, Katherine and Weller, Adrian and Oliver, Nuria and Scarlett, Jonathan and Berkenkamp, Felix},
  volume = 	 {235},
  series = 	 {Proceedings of Machine Learning Research},
  month = 	 {21--27 Jul},
  publisher =    {PMLR},
}

@article{lerer2024multigrid,
  title={Multigrid-augmented deep learning preconditioners for the Helmholtz equation using compact implicit layers},
  author={Lerer, Bar and Ben-Yair, Ido and Treister, Eran},
  journal={SIAM Journal on Scientific Computing},
  volume={46},
  number={5},
  pages={S123--S144},
  year={2024},
  publisher={SIAM}
}

@article{berenger1994perfectly,
  title={A perfectly matched layer for the absorption of electromagnetic waves},
  author={Berenger, Jean-Pierre},
  journal={Journal of computational physics},
  volume={114},
  number={2},
  pages={185--200},
  year={1994},
  publisher={Elsevier}
}

@book{boyd2001chebyshev,
  title={Chebyshev and Fourier spectral methods},
  author={Boyd, John P},
  year={2001},
  publisher={Courier Corporation}
}

@article{israeli1981approximation,
  title={Approximation of radiation boundary conditions},
  author={Israeli, Moshe and Orszag, Steven A},
  journal={Journal of computational physics},
  volume={41},
  number={1},
  pages={115--135},
  year={1981},
  publisher={Elsevier}
}

@article{he2025self,
  title={Self-Composing Neural Operators with Depth and Accuracy Scaling via Adaptive Train-and-Unroll Approach},
  author={He, Juncai and Liu, Xinliang and Xu, Jinchao},
  journal={arXiv preprint arXiv:2508.20650},
  year={2025}
}

@article{babuska1997pollution,
  title={Is the pollution effect of the FEM avoidable for the Helmholtz equation considering high wave numbers?},
  author={Babuska, Ivo M and Sauter, Stefan A},
  journal={SIAM Journal on numerical analysis},
  volume={34},
  number={6},
  pages={2392--2423},
  year={1997},
  publisher={SIAM}
}

@article{stolk2016dispersion,
  title={A dispersion minimizing scheme for the 3-D Helmholtz equation based on ray theory},
  author={Stolk, Christiaan C},
  journal={Journal of computational Physics},
  volume={314},
  pages={618--646},
  year={2016},
  publisher={Elsevier}
}

@article{stanziola2021helmholtz,
  title={A Helmholtz equation solver using unsupervised learning: Application to transcranial ultrasound},
  author={Stanziola, Antonio and Arridge, Simon R and Cox, Ben T and Treeby, Bradley E},
  journal={Journal of computational physics},
  volume={441},
  pages={110430},
  year={2021},
  publisher={Elsevier}
}

@article{brandt1997wave,
  title={Wave-ray multigrid method for standing wave equations},
  author={Brandt, Achi and Livshits, Irene},
  journal={Electron. Trans. Numer. Anal},
  volume={6},
  number={162-181},
  pages={91},
  year={1997}
}

@book{trefethen2000spectral,
  title={Spectral methods in MATLAB},
  author={Trefethen, Lloyd N},
  year={2000},
  publisher={SIAM}
}

@book{shen2011spectral,
  title={Spectral methods: algorithms, analysis and applications},
  author={Shen, Jie and Tang, Tao and Wang, Li-Lian},
  volume={41},
  year={2011},
  publisher={Springer Science \& Business Media}
}

@article{hao2024newton,
  title={Newton informed neural operator for solving nonlinear partial differential equations},
  author={Hao, Wenrui and Liu, Xinliang and Yang, Yahong},
  journal={Advances in neural information processing systems},
  volume={37},
  pages={120832--120860},
  year={2024}
}

@article{deng2022openfwi,
  title={OpenFWI: Large-scale multi-structural benchmark datasets for full waveform inversion},
  author={Deng, Chengyuan and Feng, Shihang and Wang, Hanchen and Zhang, Xitong and Jin, Peng and Feng, Yinan and Zeng, Qili and Chen, Yinpeng and Lin, Youzuo},
  journal={Advances in Neural Information Processing Systems},
  volume={35},
  pages={6007--6020},
  year={2022}
}

@article{zeng2023neural,
  title={Neural Born series operator for biomedical ultrasound computed Tomography},
  author={Zeng, Zhijun and Zheng, Yihang and Zheng, Youjia and Li, Yubing and Shi, Zuoqiang and Sun, He},
  journal={arXiv preprint arXiv:2312.15575},
  year={2023}
}

@article{stanziola2024learned,
  title={A learned born series for highly scattering media},
  author={Stanziola, Antonio and Arridge, Simon and Cox, Ben and Treeby, Bradley},
  journal={The Journal of the Acoustical Society of America},
  volume={155},
  number={3\_Supplement},
  pages={A106--A106},
  year={2024},
  publisher={Acoustical Society of America}
}

@inproceedings{ronneberger2015u,
  title={U-net: Convolutional networks for biomedical image segmentation},
  author={Ronneberger, Olaf and Fischer, Philipp and Brox, Thomas},
  booktitle={International Conference on Medical image computing and computer-assisted intervention},
  pages={234--241},
  year={2015},
  organization={Springer}
}

@article{huang2021finite,
  title={A finite-difference iterative solver of the Helmholtz equation for frequency-domain seismic wave modeling and full-waveform inversion},
  author={Huang, Xingguo and Greenhalgh, Stewart},
  journal={Geophysics},
  volume={86},
  number={2},
  pages={T107--T116},
  year={2021},
  publisher={Society of Exploration Geophysicists}
}

@article{sun2020surrogate,
  title={Surrogate modeling for fluid flows based on physics-constrained deep learning without simulation data},
  author={Sun, Luning and Gao, Han and Pan, Shaowu and Wang, Jian-Xun},
  journal={Computer Methods in Applied Mechanics and Engineering},
  volume={361},
  pages={112732},
  year={2020},
  publisher={Elsevier}
}

@article{MELCHERS2026118893,
title = {Neural Green's operators for parametric partial differential equations},
journal = {Computer Methods in Applied Mechanics and Engineering},
volume = {455},
pages = {118893},
year = {2026},
doi = {https://doi.org/10.1016/j.cma.2026.118893},
author = {H.A. Melchers and J.H.M. Prins and M.R.A. Abdelmalik},
}

@article{kopanivcakova2025leveraging,
  title={Leveraging operator learning to accelerate convergence of the preconditioned conjugate gradient method},
  author={Kopani{\v{c}}{\'a}kov{\'a}, Alena and Lee, Youngkyu and Karniadakis, George Em},
  journal={Machine Learning for Computational Science and Engineering},
  volume={1},
  number={2},
  pages={39},
  year={2025},
  publisher={Springer}
}

@article{lee2025fast,
  title={Fast meta-solvers for 3D complex-shape scatterers using neural operators trained on a non-scattering problem},
  author={Lee, Youngkyu and Liu, Shanqing and Zou, Zongren and Kahana, Adar and Turkel, Eli and Ranade, Rishikesh and Pathak, Jay and Karniadakis, George Em},
  journal={Computer Methods in Applied Mechanics and Engineering},
  volume={446},
  pages={118231},
  year={2025},
  publisher={Elsevier}
}

@article{lee2025neural,
  title={A Neural-Operator Preconditioned Newton Method for Accelerated Nonlinear Solvers},
  author={Lee, Youngkyu and Liu, Shanqing and Darbon, Jerome and Karniadakis, George Em},
  journal={arXiv preprint arXiv:2511.08811},
  year={2025}
}

@article{lee2025hybrid,
  title={Hybrid Iterative Solvers with Geometry-Aware Neural Preconditioners for Parametric PDEs},
  author={Lee, Youngkyu and Florencio, Francesc Levrero and Pathak, Jay and Karniadakis, George Em},
  journal={arXiv preprint arXiv:2512.14596},
  year={2025}
}

\appendix

\section{Proofs and Supporting Analysis}
\label{app:proofs}

\subsection{Proof of Proposition~\ref{prop:equivalence_spectral}}
\label{app:proof_equivalence_spectral}

\begin{proof}

\emph{Identity.}\; From $G_\eta L_\eta=I$ and $A=L_\eta-V_\eta$:
\[
I-G_\eta V_\eta
=
G_\eta L_\eta - G_\eta V_\eta
=
G_\eta(L_\eta - V_\eta)
=
G_\eta A.
\]
\emph{Spectral bounds.}\; Since $\rho(G_\eta V_\eta)\le\|G_\eta V_\eta\|_2\le q$, every eigenvalue $\mu$ of $G_\eta V_\eta$ satisfies $|\mu|\le q$. Eigenvalues of $I-G_\eta V_\eta$ are $1-\mu$, giving \eqref{eq:disk_bound}. The condition $\|G_\eta V_\eta\|_2<1$ ensures the Neumann series converges:
\begin{equation}
(I-G_\eta V_\eta)^{-1}=\sum_{j=0}^{\infty}(G_\eta V_\eta)^j,
\label{eq:neumann_bound}
\end{equation}
so that
\[
\|I-G_\eta V_\eta\|_2\le 1+q,\qquad
\|(I-G_\eta V_\eta)^{-1}\|_2\le \sum_{j\ge 0}q^j=\frac{1}{1-q},
\]
which yields
\begin{equation}
\kappa_2(I-G_\eta V_\eta)\le \frac{1+q}{1-q}.
\end{equation}
\end{proof}
Part of the conclusion was also noted in~\cite{huang2021finite}.
\subsection{Proof of Proposition~\ref{prop:riesz_map}}
\label{app:proof_riesz_map}

\begin{proof}
Using \eqref{eq:key_identity}, \eqref{eq:L_bs_Reta_integral} can be rewritten as
\begin{equation}
\mathcal{L}_{\mathrm{bs}}^{R_\eta}(\theta)
=
\E_{\bm r\sim\mathcal{D}}
\frac{
\left\|
G_\eta A\,\mathcal{M}_\theta(G_\eta \bm r)-G_\eta \bm r
\right\|_2
}{
\|G_\eta \bm r\|_2
}.
\label{eq:L_bs_Reta_GA}
\end{equation}
By definition, $\|\bm x\|_{R_\eta}=\|G_\eta \bm x\|_2$. Substituting
$\bm x = A\mathcal{M}_\theta(G_\eta \bm r)-\bm r$ gives
$\|A\mathcal{M}_\theta(G_\eta \bm r)-\bm r\|_{R_\eta}
=
\|G_\eta A\mathcal{M}_\theta(G_\eta \bm r)-G_\eta \bm r\|_2$,
and similarly $\|\bm r\|_{R_\eta}=\|G_\eta \bm r\|_2$, yielding \eqref{eq:L_bs_Reta_riesz}.
\end{proof}

\subsection{Interpretation in preconditioned geometry.}
\label{app:iterp_precond_geometry}
Relative to \eqref{eq:L_dir}, \eqref{eq:L_bs_l2} and \eqref{eq:L_bs_Reta_integral} replace the direct regression target $A^{-1}$ with the preconditioned target $(G_\eta A)^{-1}$ by feeding $G_\eta \bm r$ into $\mathcal{M}_\theta$. The key difference lies in the residual metric: $\mathcal{L}_{\mathrm{bs}}^{\ell_2}$ measures the physical residual $e(\bm r;\theta):=A\,\mathcal{M}_\theta(G_\eta \bm r)-\bm r$ in the Euclidean norm $\|e(\bm r;\theta)\|_2$, whereas $\mathcal{L}_{\mathrm{bs}}^{R_\eta}$ measures it in the preconditioned norm $\|e(\bm r;\theta)\|_{R_\eta}=\|G_\eta e(\bm r;\theta)\|_2$. By Proposition~\ref{prop:riesz_map}, the latter is exactly the residual ratio evaluated in the same $R_\eta$-geometry induced by the preconditioned coordinates used by NPBS inference. For the constant-coefficient Helmholtz reference, the Fourier symbol
\begin{equation}
\widehat{G_\eta}(\bm \xi)=\frac{1}{|\bm \xi|^2-k_0^2-i\eta}
\end{equation}
shows explicitly how the metric reweights residual modes in an $\eta$-controlled way. In this sense, optimization is formulated in the left-preconditioned geometry associated with $G_\eta A=I-G_\eta V_\eta$. When $\|G_\eta V_\eta\|<1$ (Proposition~\ref{prop:equivalence_spectral}), the spectrum of $G_\eta A$ is concentrated near $1$, so different modal components are brought to a more comparable scale. This provides an interpretable explanation for why the metric-matched objective may exhibit more favorable optimization behavior in practice than training directly against $A$.

\subsection{Role of \texorpdfstring{$\eta$}{eta} in Metric and Iteration Coordinates}
\label{app:role_eta}

The shift $\eta$ defines both the left preconditioner $G_\eta:=L_\eta^{-1}$ and the induced residual metric
$R_\eta:=G_\eta^\ast G_\eta$.
Consequently, the $R_\eta$-weighted residual norm satisfies
\begin{equation}
\|\bm f-A\bm u\|_{R_\eta}^2 := (\bm f-A\bm u)^\ast R_\eta (\bm f-A\bm u)=\|G_\eta(\bm f-A\bm u)\|_2^2,
\end{equation}
so $\eta$ controls the geometry in which residuals are measured and corrected, not only numerical damping.
In practice, we tune $\eta$ via a small validation sweep and then keep it fixed between training and inference
for each dataset to preserve objective--inference consistency.
Heuristically, increasing $\eta$ can move the reference inverse farther from resonance and may improve the contraction behavior of the fixed-point map, whereas excessively large damping can also weaken the usefulness of the resulting correction directions.

\section{Full Derivations for CDR and Nonlinear Extensions}
\label{app:cdr_nl_full}
This appendix gives the full technical derivations deferred from the main text.

\subsection{Convection-Diffusion-Reaction (Linear)}
Consider
\begin{equation}
\mathcal{L}_{\mathrm{cdr}}u
=
-\nabla\cdot(\kappa(x)\nabla u)
\;+\;
\mathbf{v}(x)\cdot\nabla u
\;+\;
\sigma(x)u
=
f.
\label{eq:cdr_pde}
\end{equation}
Choose background constants $\kappa_0,\mathbf{v}_0,\sigma_0$ and define
\begin{equation}
\kappa=\kappa_0+\delta\kappa,\qquad
\mathbf{v}=\mathbf{v}_0+\delta\mathbf{v},\qquad
\sigma=\sigma_0+\delta\sigma.
\end{equation}
Set
\begin{equation}
\mathcal{L}_{0,\mathrm{cdr}}u
:=
-\kappa_0\Delta u
\;+\;
\mathbf{v}_0\cdot\nabla u
\;+\;
\sigma_0 u,
\label{eq:cdr_L0}
\end{equation}
and define $\mathcal{V}_{\mathrm{cdr}}$ by $\mathcal{L}_{\mathrm{cdr}}=\mathcal{L}_{0,\mathrm{cdr}}-\mathcal{V}_{\mathrm{cdr}}$, i.e.,
\begin{equation}
\mathcal{V}_{\mathrm{cdr}}u
=
\nabla\cdot(\delta\kappa\nabla u)
\;-\;
\delta\mathbf{v}\cdot\nabla u
\;-\;
\delta\sigma\,u.
\label{eq:cdr_V}
\end{equation}
Then
\begin{equation}
\mathcal{L}_{0,\mathrm{cdr}}u
=
\mathcal{V}_{\mathrm{cdr}}u+f.
\end{equation}
With $G_{\mathrm{cdr}}:=\mathcal{L}_{0,\mathrm{cdr}}^{-1}$:
\begin{equation}
(I-G_{\mathrm{cdr}}\mathcal{V}_{\mathrm{cdr}})u
=
G_{\mathrm{cdr}}f
=
G_{\mathrm{cdr}}\mathcal{L}_{\mathrm{cdr}}u.
\label{eq:cdr_integral}
\end{equation}
In discrete form ($A_{\mathrm{cdr}},V_{\mathrm{cdr}},G_{\mathrm{cdr}}$):
\begin{equation}
I-G_{\mathrm{cdr}}V_{\mathrm{cdr}}
=
G_{\mathrm{cdr}}A_{\mathrm{cdr}}.
\label{eq:cdr_identity}
\end{equation}
Therefore, the CDR-specific instance of the metric-matched Born Series inspired objective is
\begin{equation}
\mathcal{L}_{\mathrm{bs}}^{R_{\mathrm{cdr}}}(\theta)
=
\E_{r\sim\mathcal{D}}
\frac{
\norm{
(I-G_{\mathrm{cdr}}V_{\mathrm{cdr}})
\mathcal{M}_\theta(G_{\mathrm{cdr}}r)
-G_{\mathrm{cdr}}r
}
}{
\norm{G_{\mathrm{cdr}}r}
}
\label{eq:cdr_bs_loss}
\end{equation}
or equivalently
\begin{equation}
\mathcal{L}_{\mathrm{bs}}^{R_{\mathrm{cdr}}}(\theta)
=
\E_{r\sim\mathcal{D}}
\frac{
\norm{
G_{\mathrm{cdr}}A_{\mathrm{cdr}}
\mathcal{M}_\theta(G_{\mathrm{cdr}}r)
-G_{\mathrm{cdr}}r
}
}{
\norm{G_{\mathrm{cdr}}r}
}.
\label{eq:cdr_bs_loss_eq}
\end{equation}
with $R_{\mathrm{cdr}}:=G_{\mathrm{cdr}}^\ast G_{\mathrm{cdr}}$. This is exactly the same template as $\mathcal{L}_{\mathrm{bs}}^{R_\eta}$ in Section~\ref{sec:bs_loss}, under the substitution $(G_\eta,A,V_\eta)\mapsto(G_{\mathrm{cdr}},A_{\mathrm{cdr}},V_{\mathrm{cdr}})$.

For periodic domains, $G_{\mathrm{cdr}}$ is a convolution operator with Fourier symbol
\begin{equation}
\widehat{G}_{\mathrm{cdr}}(\xi)
=
\frac{1}{\kappa_0|\xi|^2+i\mathbf{v}_0\cdot\xi+\sigma_0},
\label{eq:cdr_G_symbol}
\end{equation}
so high-frequency diffusion-dominated modes are attenuated in training. As in Helmholtz, optimizing against $G_{\mathrm{cdr}}A_{\mathrm{cdr}}$ instead of raw $A_{\mathrm{cdr}}$ improves conditioning and better matches the residual distribution in preconditioned iterations.

\subsection{Nonlinear PDE via Newton-Born Linearization}
For the nonlinear example
\begin{equation}
\mathcal{F}_{\mathrm{nl}}(u):=-\Delta u-u^2-f=0,
\label{eq:nonlinear_residual}
\end{equation}
Newton at iterate $u^{(m)}$ solves
\begin{equation}
J_m\,\delta u^{(m)} = r^{(m)},
\qquad
J_m:=-\Delta-2u^{(m)},
\qquad
r^{(m)}:=-\mathcal{F}_{\mathrm{nl}}(u^{(m)}),
\label{eq:newton_linear_system}
\end{equation}
followed by $u^{(m+1)}=u^{(m)}+\delta u^{(m)}$.

\paragraph{Discrete realization used in this work (Dirichlet + FDM + DST).}
For the Newton experiments in Section~\ref{sec:numerical_results}, we use homogeneous Dirichlet boundary conditions on $\Omega=[0,1]^2$ and a five-point finite-difference discretization on interior unknowns. With $h=1/(N+1)$, define the discrete negative Laplacian
\begin{equation}
L_{\mathrm{D}}u
=
\frac{1}{h^2}
\begin{bmatrix}
0 & -1 & 0\\
-1 & 4 & -1\\
0 & -1 & 0
\end{bmatrix}*u,
\end{equation}
so the discrete Jacobian is
\begin{equation}
J_m^{\mathrm{disc}} = L_{\mathrm{D}} - 2\,\mathrm{diag}(u^{(m)}).
\end{equation}
In our implementation, the $-2u^{(m)}$ term is replaced by its spatial average $-2\bar{u}^{(m)}$ in the preconditioning pipeline, where $\bar{u}^{(m)}$ denotes the spatial mean of $u^{(m)}$.

To build a Born-style preconditioned form for each Newton step, pick a reference Jacobian
\begin{equation}
J_{0,m}:=-\Delta-2\bar{u}^{(m)}+\alpha_m \I,
\qquad
\bar{u}^{(m)}:=\text{spatial average of }u^{(m)},
\qquad
\alpha_m\ge 0,
\label{eq:newton_ref}
\end{equation}
whose discrete counterpart is $J_{0,m}^{\mathrm{disc}}=L_{\mathrm{D}}-2\bar u^{(m)}I+\alpha_m I$ (or $J_{0,m}^{\mathrm{disc}}\approx L_{\mathrm{D}}+\alpha_m I$ in the Laplacian-dominant implementation). Equivalently, if one denotes the code-level Laplace operator by $\Delta_h$, then $L_{\mathrm{D}}=-\Delta_h$ under the sign convention used here. Define
\begin{equation}
V_m:=J_{0,m}-J_m.
\label{eq:newton_vm}
\end{equation}
Then $J_m=J_{0,m}-V_m$, $G_m:=J_{0,m}^{-1}$, and
\begin{equation}
(I-G_mV_m)\,\delta u^{(m)}=G_m r^{(m)}
\quad\Leftrightarrow\quad
I-G_mV_m=G_mJ_m.
\label{eq:newton_born_identity}
\end{equation}
Hence the Newton-state instance of the metric-matched Born Series objective is
\begin{equation}
\mathcal{L}_{\mathrm{bs}}^{R_{\mathrm{N}}}(\theta)
=
\E_{(u^{(m)},r)\sim\mathcal{D}_{\mathrm{N}}}
\frac{
\norm{
(I-G_mV_m)\mathcal{M}_\theta(G_m r)-G_m r
}
}{
\norm{G_m r}
},
\label{eq:nl_bs_loss}
\end{equation}
equivalently
\begin{equation}
\mathcal{L}_{\mathrm{bs}}^{R_{\mathrm{N}}}(\theta)
=
\E_{(u^{(m)},r)\sim\mathcal{D}_{\mathrm{N}}}
\frac{
\norm{
G_mJ_m\mathcal{M}_\theta(G_m r)-G_m r
}
}{
\norm{G_m r}
}.
\label{eq:nl_bs_loss_eq}
\end{equation}
where the per-state metric is $R_m:=G_m^\ast G_m$ (so $R_{\mathrm{N}}$ denotes the Newton-trajectory family $\{R_m\}$). This is the same metric-matched objective family as $\mathcal{L}_{\mathrm{bs}}^{R_\eta}$, with state-dependent references.

\paragraph{Discussion (nonlinear).}
The loss is computed on \emph{linearized} systems along Newton trajectories, not directly on the nonlinear map. This has two effects:
\begin{enumerate}
\item It preserves the same preconditioned-coordinate advantage as the linear case (better-conditioned operator equation).
\item It aligns training with actual inference, where the solver repeatedly tackles Jacobian systems with changing $u^{(m)}$.
\end{enumerate}
In practice, one may use an inner iteration
\begin{equation}
\delta u_{k+1}^{(m)}
=
\delta u_k^{(m)}
\;+\;
\mathcal{M}_\theta\!\left(
G_m\left(r^{(m)}-J_m\delta u_k^{(m)}\right)
\right),
\label{eq:newton_inner_iter}
\end{equation}
then set $\delta u^{(m)}\approx \delta u_{K}^{(m)}$ and update $u^{(m+1)}=u^{(m)}+\delta u^{(m)}$.

\section{Verification: Fast Transform Implementation of Green Operators}
\label{app:fft_verify_all}
We verify, case by case, when Green-operator application is fast. A concrete Fourier pseudospectral discretization recipe for $L_\eta$ and $G_\eta$ is provided in Appendix~\ref{app:fft_ps}.

\begin{proposition}[Transform-diagonalizable Green operators]
\label{prop:fft_all_cases}
On a uniform grid, if the chosen reference operator has constant coefficients and compatible boundary conditions (periodic, or transform-compatible Dirichlet/Neumann), then its discrete matrix is diagonalizable by a fast trigonometric transform. Consequently, applying the corresponding discrete Green operator costs $\mathcal{O}(N\log N)$.
\end{proposition}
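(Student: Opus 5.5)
The plan is to reduce everything to one dimension and then tensorize. On a uniform grid, a constant-coefficient reference operator discretized by standard finite differences, or by a Fourier pseudospectral method, is a polynomial in the one-dimensional second-difference (or spectral differentiation) matrices acting along each coordinate direction. Concretely, $L_\eta=\sum_{j=1}^d I\otimes\cdots\otimes D_j\otimes\cdots\otimes I - (k_0^2+i\eta)I$, and the CDR or Newton references add first-order and zeroth-order terms built from the same factors. It therefore suffices to show that each one-dimensional factor $D_j$ is diagonalized by a fast transform $F_j$, and that all terms appearing in a given direction share that transform.

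I would carry out the one-dimensional cases separately. \emph{Periodic:} the second-difference and centered first-difference matrices are circulant, so the DFT matrix $F$ diagonalizes them. The standard circulant eigenvector argument gives the symbols $\lambda(\xi)=\tfrac{4}{h^2}\sin^2(\xi h/2)$ and $\tfrac{i}{h}\sin(\xi h)$. In the pseudospectral case the symbols are simply $|\xi|^2$ and $i\xi$ by construction. \emph{Dirichlet:} the tridiagonal Toeplitz matrix $\mathrm{tridiag}(-1,2,-1)/h^2$ has eigenvectors $\sin(\pi jk/(N+1))$, which is verified directly from the identity $\sin((k-1)\theta)+\sin((k+1)\theta)=2\cos\theta\,\sin(k\theta)$. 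Hence it is diagonalized by DST-I. \emph{Neumann:} with the appropriate ghost-point or cell-centered discretization, the analogous cosine identity gives eigenvectors of DCT type (DCT-I or DCT-II, depending on the grid). The word ``transform-compatible'' in the statement is meant to cover precisely these pairings.

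Next I would tensorize. Set $F=F_1\otimes\cdots\otimes F_d$. By the mixed-product property, $F L_\eta F^{-1}=\Lambda$ is diagonal, with entries $\sum_j\lambda_j(\xi_j)-(k_0^2+i\eta)$ (plus the first- and zeroth-order symbols in the CDR case). Invertibility of $L_\eta$, which is assumed in Proposition~\ref{prop:equivalence_spectral}, means every entry of $\Lambda$ is nonzero. For the Helmholtz reference this is automatic, since the imaginary part is $-\eta\neq0$. It follows that $G_\eta=F^{-1}\Lambda^{-1}F$.

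Finally, the cost. Applying $G_\eta$ amounts to one forward transform, a pointwise division by precomputed symbols ($\mathcal{O}(N)$), and one inverse transform. Multidimensional FFT, DST and DCT are applied dimension by dimension, each costing $\mathcal{O}(N\log N)$ overall, which gives the claim.

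The main obstacle is the hypothesis ``compatible'' itself. A centered first-derivative term, as in CDR with $\mathbf v_0\neq0$, is not diagonalized by the DST under Dirichlet conditions: it maps sines to cosines. So the non-periodic cases must be restricted to even-order constant-coefficient references. This covers the Helmholtz and Newton references used here, whereas the CDR experiments are periodic. I would state this restriction explicitly rather than attempt a general proof.
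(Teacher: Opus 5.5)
Your proof is correct. Its mechanism is the paper's: diagonalize the reference by a fast transform, divide by the nonvanishing symbol, and transform back. What differs is how the diagonalization is established, and how much of it is actually verified.

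The paper handles the periodic case in one step. It observes that a constant-coefficient periodic stencil gives a block-circulant-with-circulant-blocks matrix, which the multidimensional DFT diagonalizes. It then only asserts that the Dirichlet/Neumann cases work with DST/DCT.

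You instead reduce to one-dimensional factors, check the sine and cosine eigenvectors explicitly, and tensorize using the mixed-product property. This genuinely proves the non-periodic cases and makes the invertibility requirement explicit. The multilevel-circulant view is shorter and needs no separability bookkeeping in the periodic case.

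Your factor-by-factor check, however, exposes what ``transform-compatible'' has to mean: the plain DST/DCT do not diagonalize centered first-order terms. The paper's proof starts from \emph{any} constant-coefficient $L_0$ and passes over this caveat. It is harmless there only because the CDR experiments are periodic and the Dirichlet Newton reference contains no first-order term.

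One refinement: saying the non-periodic case \emph{must} be restricted to even-order references is slightly too strong. In one dimension, a centered convection--diffusion stencil with Dirichlet conditions is tridiagonal Toeplitz, with sub-, main and super-diagonal entries $a,b,c$ and $ac\neq 0$. Its eigenvectors are $\bigl((a/c)^{k/2}\sin(\pi jk/(N+1))\bigr)_{k=1}^{N}$. It is therefore diagonalized by a diagonally rescaled DST-I at the same $\mathcal{O}(N\log N)$ cost, and the same holds for Kronecker sums of such factors. The rescaling becomes badly conditioned for convection-dominated references, so your restriction remains a sensible reading of the hypothesis and covers every reference used in the paper.
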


\begin{proof}[Constructive verification]
Let $L_0$ be any constant-coefficient reference operator and $G_0=L_0^{-1}$. Under periodic boundary conditions, the discrete $L_0$ is block-circulant with circulant blocks, hence
\[
L_0 = F^{-1}\Lambda F,\qquad
G_0 = F^{-1}\Lambda^{-1}F,
\]
where $F$ is the multidimensional DFT matrix and $\Lambda$ contains the symbol values. Therefore
\[
G_0 q = \mathcal{F}^{-1}\!\left(\frac{\widehat{q}}{\lambda(\xi)}\right),
\]
implemented by FFT/IFFT in $\mathcal{O}(N\log N)$.
For Dirichlet or Neumann boundaries, the same statement holds with sine/cosine transforms (DST/DCT), which are FFT-based and have the same complexity order.
\end{proof}

\paragraph{Helmholtz case.}
With $L_\eta=-\Delta-(k_0^2+i\eta)$, the symbol is
\[
\lambda_{\mathrm{H}}(\xi)=|\xi|^2-(k_0^2+i\eta),
\]
so
\[
G_\eta q = \mathcal{F}^{-1}\!\left(\frac{\widehat{q}}{\lambda_{\mathrm{H}}(\xi)}\right).
\]
Hence the Born residual $G_\eta r$ is FFT-fast under the stated boundary setting.

\paragraph{CDR case.}
With $\mathcal{L}_{0,\mathrm{cdr}}=-\kappa_0\Delta+\mathbf{v}_0\cdot\nabla+\sigma_0$, the symbol is exactly \eqref{eq:cdr_G_symbol} denominator:
\[
\lambda_{\mathrm{cdr}}(\xi)=\kappa_0|\xi|^2+i\mathbf{v}_0\cdot\xi+\sigma_0.
\]
Thus
\[
G_{\mathrm{cdr}} q = \mathcal{F}^{-1}\!\left(\frac{\widehat{q}}{\lambda_{\mathrm{cdr}}(\xi)}\right),
\]
again $\mathcal{O}(N\log N)$.

\paragraph{Newton linear systems for nonlinear PDEs.}
For the Newton experiments, we use homogeneous Dirichlet boundaries and five-point FDM on interior nodes. The reference Jacobian is diagonalized by the 2D DST-I (not periodic FFT). Writing
\[
J_{0,m}^{\mathrm{disc}} = S^{-1}\Lambda_{\mathrm{N},m}S,
\]
with $S$ the 2D DST-I transform, the preconditioned application is
\[
G_m q = S^{-1}\!\left(\frac{Sq}{\lambda_{p,q}^{(m)}}\right),
\]
where for the Laplacian-dominant implementation
\[
\lambda_{p,q}^{(m)}
\approx
\frac{4}{h^2}\left[\sin^2\!\left(\frac{p\pi h}{2}\right)+\sin^2\!\left(\frac{q\pi h}{2}\right)\right]
\;(+\,\alpha_m\ \text{if used}).
\]
If the mean-reaction shift is retained, the full eigenvalues of $J_{0,m}^{\mathrm{disc}}$ are the Laplacian eigenvalues above shifted by $\alpha_m-2\bar u^{(m)}$. Therefore the Newton reference inverse is applied by DST/IDST in $\mathcal{O}(N\log N)$.

\paragraph{Boundary-condition note for all cases.}
In scattering settings, we use an absorbing layer (e.g., sponge/PML-style damping) on a finite box and choose a \emph{constant-coefficient} reference operator on that computational domain; this is the operator inverted by FFT in the Born-inspired loss/iteration. If strict Dirichlet/Neumann boundaries are required, use DST/DCT (or FFT-based embedding), preserving near-identical complexity.
In particular, the experiments on Newton linear systems for nonlinear PDEs use homogeneous Dirichlet boundaries and DST-based inversion of the FDM Laplacian reference.

\section{Fourier Pseudospectral Discretization of \texorpdfstring{$L_\eta$ and $G_\eta$}{L eta and G eta}}
\label{app:fft_ps}
This appendix gives the concrete discretization used when $L_\eta$ and $G_\eta=L_\eta^{-1}$ are implemented by FFT under periodic boundary conditions. In numerical-analysis terminology, this is a \emph{Fourier pseudospectral} (or Fourier collocation) discretization: derivatives are represented by exact Fourier multipliers, while coefficient multiplications are applied pointwise in physical space~\cite{boyd2001chebyshev,trefethen2000spectral,shen2011spectral}.

\paragraph{Periodic grid and Fourier modes.}
Let $\Omega=\prod_{\alpha=1}^d [0,L_\alpha)$ and use a uniform grid with $N_\alpha$ points in direction $\alpha$:
\[
x_{j_\alpha}=j_\alpha h_\alpha,\qquad
h_\alpha=L_\alpha/N_\alpha,\qquad
j_\alpha=0,\dots,N_\alpha-1.
\]
For grid field $u_j=u(x_j)$, define the DFT coefficients $\widehat u_n$ with multi-index
\[
n=(n_1,\dots,n_d),\qquad
n_\alpha\in\Big\{-\frac{N_\alpha}{2},\dots,\frac{N_\alpha}{2}-1\Big\},
\]
and physical wavenumbers
\[
\xi_{\alpha,n}=\frac{2\pi n_\alpha}{L_\alpha},\qquad
|\xi_n|^2=\sum_{\alpha=1}^d \xi_{\alpha,n}^2.
\]

\paragraph{Discrete symbol of $L_\eta$.}
For
\[
L_\eta=-\Delta-(k_0^2+i\eta),\qquad \eta>0,
\]
the Fourier symbol is
\[
\lambda_n = |\xi_n|^2-(k_0^2+i\eta),
\]
so
\[
\widehat{L_\eta u}_n=\lambda_n\,\widehat u_n.
\]
Hence $L_\eta$ is diagonal in Fourier space, and its inverse acts modewise:
\[
\widehat{(G_\eta q)}_n = \frac{\widehat q_n}{\lambda_n}.
\]
Because $\eta>0$, $\lambda_n$ has nonzero imaginary part and the modal inversion is well-defined.

\paragraph{FFT implementation of $G_\eta q$.}
Given $q$, compute:
\begin{enumerate}
\item $\widehat q=\mathcal{F}(q)$ (FFT),
\item $\widehat z_n=\widehat q_n/\lambda_n$ (pointwise complex division),
\item $z=\mathcal{F}^{-1}(\widehat z)$ (IFFT).
\end{enumerate}
Then $z=G_\eta q$. The complexity is $\mathcal{O}(N\log N)$ with $N=\prod_\alpha N_\alpha$.

\paragraph{Relation to $A$ and $V_\eta$.}
For heterogeneous Helmholtz,
\[
A u = -\Delta u - k(x)^2u,\qquad
V_\eta u=(k(x)^2-k_0^2-i\eta)u.
\]
In Fourier pseudospectral form:
\[
-\Delta u = \mathcal{F}^{-1}\!\big(|\xi|^2\widehat u\big),
\]
while $k(x)^2u$ and $V_\eta u$ are computed pointwise in physical space. This derivative-in-Fourier / product-in-physical split is exactly the pseudospectral pattern~\cite{boyd2001chebyshev,trefethen2000spectral}. For strongly non-smooth coefficients, standard de-aliasing (e.g., $2/3$ truncation) can be used~\cite{shen2011spectral}.

\paragraph{Boundary-condition scope.}
The FFT diagonalization above is exact for periodic boundaries. For homogeneous Dirichlet/Neumann boundaries, the same constant-coefficient reference operators are diagonalized by sine/cosine transforms (DST/DCT), with the same $\mathcal{O}(N\log N)$ complexity class. In this manuscript, $G_\eta$ in CBS/MgNO-BS is implemented with the periodic/transform-compatible constant-coefficient reference operator to preserve fast application.

\section{Architectural Instantiations}
\label{app:architectures}
The main text is not architecture-centered; NPBS is compatible with different neural operators. For reproducibility, we briefly record the primary FNO instantiation used in most experiments.

\paragraph{FNO instantiation and design principle.}
Across Helmholtz, CDR, and Newton linear systems for nonlinear PDEs, we use an FNO-based preconditioner $\mathcal{M}_\theta$. Let $a$ denote PDE coefficient fields (e.g., $v$ in Helmholtz, or $a=(\kappa,\mathbf{v},\sigma)$ in CDR), and let $r$ denote the iterative residual. The design explicitly applies a \emph{nonlinear} transform to coefficient features and a \emph{linear} transform to residual features, then fuses both branches:
\begin{equation}
\mathcal{M}_\theta(r;a)
=
\mathcal{P} \Big(
\mathrm{LFL} \Big[
\mathrm{NLFL}(\Phi_\theta(a))
\odot
\mathrm{LFL}(\Psi_\theta(r))
\Big]
\Big).
\label{eq:fno_impl}
\end{equation}
The lifting/project mappings are
\begin{equation}
\Phi_\theta(a) = W_a * \mathrm{LayerNorm}(\mathrm{pad}(a)),
\qquad
\Psi_\theta(r) = W_r * \mathrm{pad}(r),
\label{eq:fno_lift}
\end{equation}
and
\begin{equation}
\mathcal{P}(z) = W_p * \mathrm{unpad}(z).
\label{eq:fno_proj}
\end{equation}
Here $\mathrm{NLFL}(\Phi_\theta(a))$ is used for nonlinear coefficient encoding, while the residual branch keeps linear processing through $\mathrm{LFL}(\Psi_\theta(r))$, ensuring linearity with respect to $r$.

The Fourier blocks are
\begin{equation}
\mathrm{LFL}(x)
=
\mathcal{F}^{-1}\!\big(\mathcal{W}^{\mathrm{F}}_{\theta}\odot \mathcal{F}(x)\big)
\;+
\mathcal{W}^{\mathrm{loc}}_{\theta} * x,
\label{eq:lfl}
\end{equation}
\begin{equation}
\mathrm{NLFL}(x)
=
\varphi\!\big(\mathrm{LFL}(x)\big),
\label{eq:nlfl}
\end{equation}
where $\mathcal{F}$ and $\mathcal{F}^{-1}$ are Fourier transforms, $\mathcal{W}^{\mathrm{F}}_{\theta}$ are truncated-mode spectral weights, $\mathcal{W}^{\mathrm{loc}}_{\theta}$ is a local $1\times1$ convolution, and $\varphi$ is GELU. In implementation, coefficient features pass through stacked NLFL blocks, while the residual branch uses a single LFL block.

\section{Dataset Illustration}
\label{app:dataset_figures}

\subsection{Heterogeneous Helmholtz Equation}
\label{app:helmholtz_dataset}

We use velocity models from the OpenFWI benchmark~\cite{deng2022openfwi}. For both CurveVel-A and CurveFault-B, the training set contains $3{,}000$ samples and the test set contains $50$ samples. The computational domain is discretized on a $256\times256$ grid with a 32-cell perfectly matched layer (PML) boundary, yielding a padded grid of size $288\times288$. Spatial velocity fields range from approximately $1500$ to $4500$\,m/s, and the grid resolution ensures at least $12$ points per wavelength. Representative velocity models are shown in Figure~\ref{fig:openfwi_dataset}.

\begin{figure}[ht]
    \centering
    \includegraphics[width=0.95\linewidth]{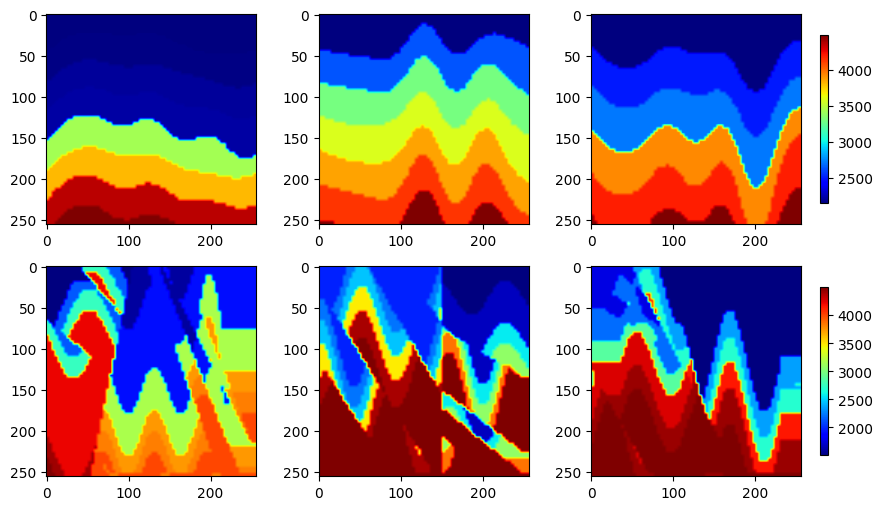}
    \caption{Representative heterogeneous velocity models from the OpenFWI datasets. Top row: CurveVel-A, featuring relatively smooth, curved velocity transitions. Bottom row: CurveFault-B, characterized by sharp discontinuities, strong contrasts, and complex fault structures.}
    \label{fig:openfwi_dataset}
\end{figure}

\subsection{CDR Dataset Generation}
\label{app:cdr_dataset}

The CDR benchmark uses a $128\times128$ periodic grid. The training set contains $4000$ samples and the test set contains $100$ samples. The diffusion field $\kappa$ is drawn from a Gaussian random field (GRF); the velocity $\mathbf{v}$ is obtained from a GRF stream function $\psi$ via
\begin{equation}
v_x=\frac{\partial \psi}{\partial y},\qquad v_y=-\frac{\partial \psi}{\partial x},
\end{equation}
so that incompressibility $\nabla \cdot \mathbf{v}=0$ is enforced by construction; the source term $f$ is a Gaussian bump with randomly sampled center and width. Representative samples are shown in Figure~\ref{fig:cdr_data_samples}.

\begin{figure}[ht]
    \centering
    \includegraphics[width=0.95\linewidth]{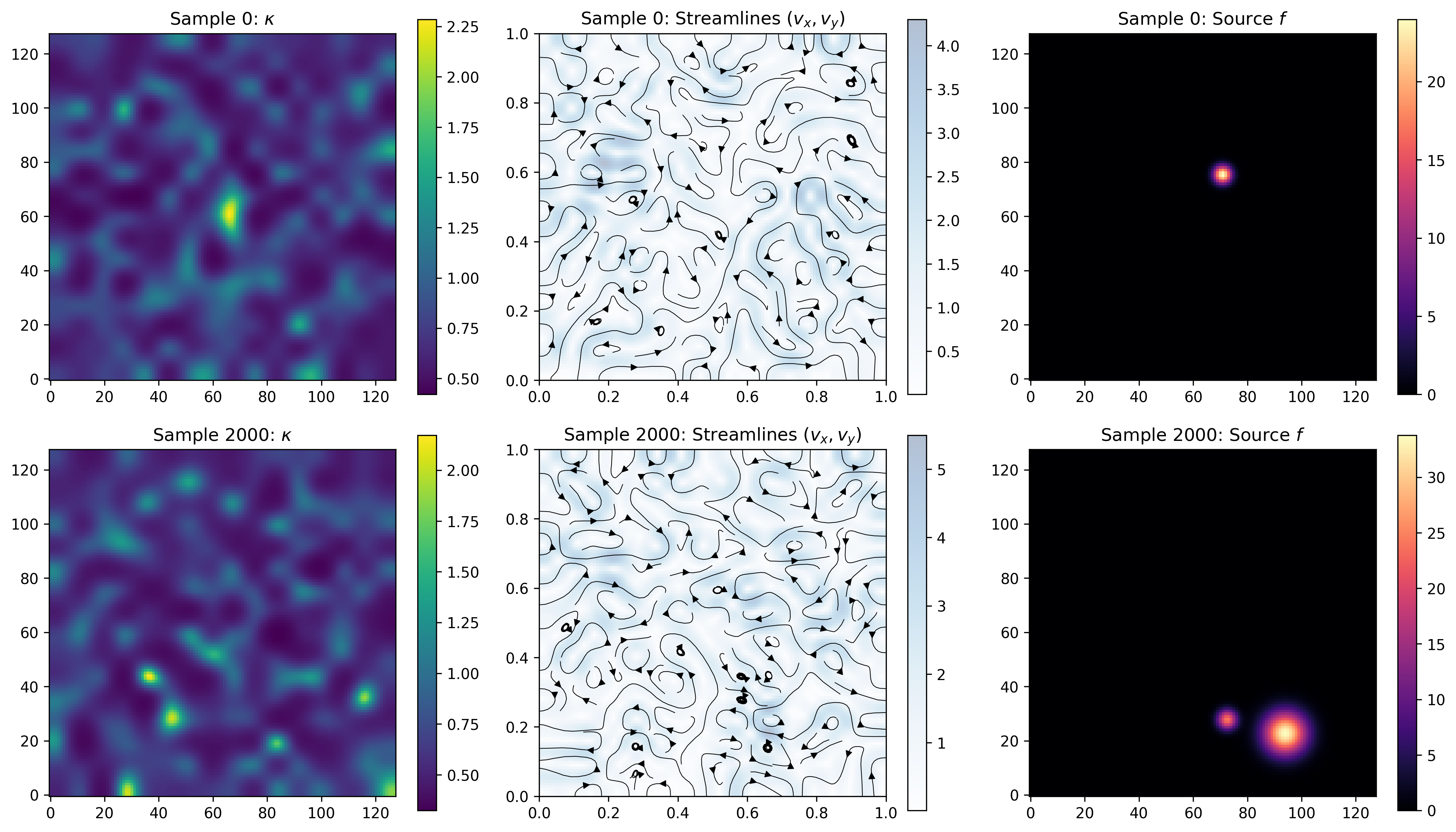}
    \caption{Representative samples from the CDR benchmark. Each row shows the diffusion field $\kappa$, the incompressible velocity field $\mathbf{v}$ visualized by streamlines, and the source term $f$.}
    \label{fig:cdr_data_samples}
\end{figure}

\subsection{Newton Benchmark Data}
\label{app:newton_dataset}

Following~\cite{hao2024newton}, the domain $\Omega=[0,1]^2$ is discretized with $N=64$ interior grid points (grid spacing $h=1/65$) and source amplitude $s=1600$. Training data consists of $5000$ Newton trajectory pairs, and the test set contains $100$ samples. Supervision is obtained by generating Newton trajectories with sparse direct solves. To ensure that the trajectories probe distinct solution branches, initial guesses are constructed by perturbing a reference solution with low-frequency sine-series noise,
\begin{equation}
u^0=u^\star+\alpha\sum_{i,j=1}^{4}c_{ij}\sin(i\pi x)\sin(j\pi y),\quad
c_{ij}\sim\mathcal{N}(0,0.1^2),\ \alpha=4.0.
\end{equation}
Representative branches obtained in this way are shown in Figure~\ref{fig:newton_diff_sols}.

\begin{figure}[ht]
    \centering
    \includegraphics[width=0.65\linewidth]{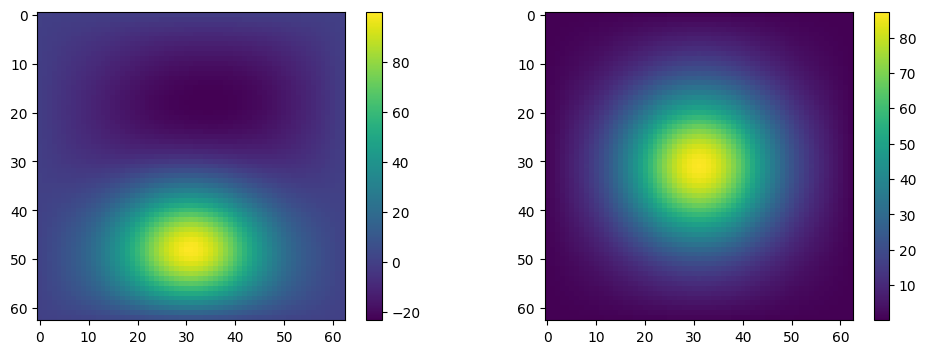}
    \caption{Multiple solutions of the nonlinear PDE under different initial guesses.}
    \label{fig:newton_diff_sols}
\end{figure}


\section{Experimental Setup}
\label{app:experimental_setup}

This appendix details the dataset splits, backbone architectures, and training hyperparameters for all experiments reported in Section~\ref{sec:numerical_results}.

\subsection{Heterogeneous Helmholtz Equation}
\label{app:setup_helmholtz}

\paragraph{Dataset.}
We use velocity models from the OpenFWI benchmark~\cite{deng2022openfwi}. For both CurveVel-A and CurveFault-B, the training set contains $3{,}000$ samples and the test set contains $50$ samples. The computational domain is discretized on a $256\times256$ grid with a 32-cell perfectly matched layer (PML) boundary, yielding a padded grid of size $288\times288$ (after accounting for the absorbing boundary on each side).

\paragraph{FNO backbone (primary).}
The primary preconditioner follows the FNO instantiation of Eq.~\eqref{eq:fno_impl} with $3$ NLFL layers, $25$ hidden channels, and $45$ retained Fourier modes per layer. The residual branch uses a single LFL with full-bandwidth modes ($N_{\mathrm{pad}}/2+1$). Training uses the Adam optimizer for $400$ epochs with batch size $30$, initial learning rate $10^{-3}$, and a step-decay schedule (factor $0.8$ every $20$ epochs). Inference is performed in float64 with a relative tolerance of $10^{-6}$ and a maximum of $2{,}500$ iterations.

\paragraph{Comparison backbones.}
To verify that the NPBS gains are architecture-agnostic, we evaluate additional backbones under the same NPBS+$\mathcal{L}_{\mathrm{bs}}^{R_\eta}$ framework. All comparison models are trained for $400$ epochs with batch size $30$, learning rate $10^{-3}$, and step-decay (factor $0.8$ every $20$ epochs). Their architecture-specific parameters are:
\begin{itemize}
    \item \textbf{MgNO}~\cite{he2024mgno}: A multigrid neural operator with $32$ channels, $4$-level V-cycle structure with $[2,2]$ pre-/post-smoothing iterations per level, and GELU activation.
    \item \textbf{MGCFNN}~\cite{xie2025mgcfnn}: A multigrid convolutional Fourier neural network with $32$ channels, $3$-level hierarchy. Level~0 uses spatial $5\times5$ convolutions (1 sweep); levels~1 and~2 use complex Fourier layers with wavenumber cutoffs $15$ and $10$ respectively (2 and 4 sweeps).
    \item \textbf{Encoder-Solver}~\cite{lerer2024multigrid}: A U-Net-style encoder-decoder with $32$ channels, $4$ levels, and implicit weight-sharing across decoder iterations.
    \item \textbf{U-Net}~\cite{ronneberger2015u}: A standard $4$-level U-Net with $32$ channels and skip connections; the rhs-path convolutions are bias-free to preserve linearity in the residual.
    \item \textbf{DeepONet}~\cite{lu2021learning}: A DeepONet variant with $32$ basis functions, $64$ trunk hidden channels, and $4$ blocks.
\end{itemize}
For runtime, we use \texttt{torch.compile} when stable. MGCFNN is evaluated without compilation because \texttt{torch.compile} led to divergence during inference. The original Encoder--solver is supervised and does not preserve the linear structure required in our iterative setting, which leads to divergence; we therefore adopt a linearized variant that retains its core multigrid-inspired components.

\subsection{Algorithmic Generalization Beyond the Helmholtz Equation}
\label{app:setup_generalization}

\subsubsection{Convection--Diffusion--Reaction (CDR)}
\label{app:setup_cdr}

\paragraph{Dataset.}
The CDR benchmark uses a $128\times128$ periodic grid. The training set contains $4{,}000$ samples and the test set contains $400$ samples. Coefficients are generated as described in Appendix~\ref{app:cdr_dataset}.

\paragraph{FNO backbone.}
The preconditioner uses the same FNO design principle as in the Helmholtz setting (Eq.~\eqref{eq:fno_impl}), with $25$ hidden channels, $3$ NLFL layers with $32$ retained Fourier modes per layer, and zero-padding of $8$ cells. Training uses Adam for $60$ epochs with batch size $32$, initial learning rate $10^{-3}$, and step-decay (factor $0.8$ every $3$ epochs). Inference uses a relative tolerance of $10^{-6}$.

\subsubsection{Newton Linear Systems for Nonlinear PDEs}
\label{app:setup_newton}

\paragraph{Dataset.}
Following~\cite{hao2024newton}, the domain $\Omega=[0,1]^2$ is discretized with $N=64$ interior grid points (grid spacing $h=1/65$) and the source amplitude is $s=1600$. Training data consists of $10{,}000$ Newton trajectory pairs (combining two trajectory sets of $5{,}000$ each from different solution branches), and the test set contains $1{,}000$ samples.

\paragraph{FNO backbone.}
The preconditioner uses $25$ hidden channels, $2$ NLFL layers with $20$ retained Fourier modes, and zero-padding of $8$ cells. Training uses Adam for $200$ epochs with batch size $50$, initial learning rate $10^{-4}$, and weight decay $10^{-6}$. The inner linear solver uses a relative tolerance of $10^{-4}$.


\section{Compute Resources}
\label{app:compute_resources}

All experiments were conducted using PyTorch 2.6.0 with CUDA 12.4 on a single workstation equipped with an Intel Xeon Platinum 8468 CPU and one NVIDIA H100 GPU.
All solvers---including classical baselines such as CBS and the shifted-Laplacian preconditioning---are implemented entirely in PyTorch and accelerated via CUDA, ensuring that wall-clock comparisons reflect algorithmic differences rather than implementation-level disparities.
No distributed training, large-scale pretraining, or fine-tuning of pretrained foundation models was involved; the main computational cost comes from training lightweight neural preconditioners and iterative solver inference.


\section{Training Convergence}
\label{app:training_convergence}

Figure~\ref{fig:train_val_loss_curvefault_b} shows the training and validation rTol histories for the three loss configurations on CurveFault-B. 

\begin{figure}[ht]
    \centering
    \includegraphics[width=0.75\linewidth]{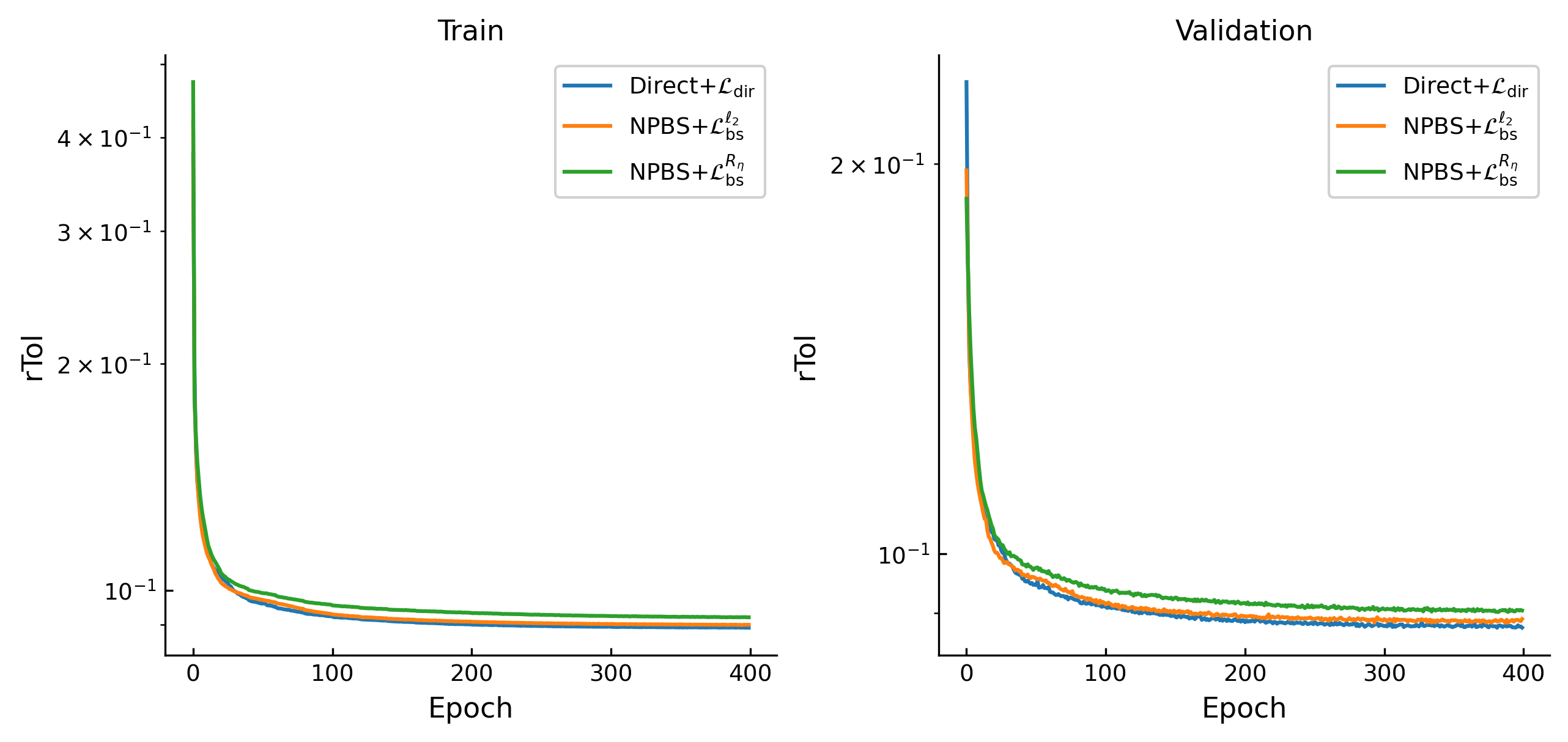}
    \caption{Training and validation rTol histories on CurveFault-B.}
    \label{fig:train_val_loss_curvefault_b}
\end{figure}

\begin{table}[htbp]
\centering
\caption{Training statistics on CurveFault-B under the same FNO setup.}
\label{tab:curvefault_b_train_stats}
\renewcommand{\arraystretch}{1.15}
\begin{tabular}{lccc}
\toprule
Method & Train rTol & Val rTol & Train time \\
\midrule
Direct + $\mathcal{L}_{\mathrm{dir}}$ & $8.91\times 10^{-2}$ & $8.77\times 10^{-2}$ & 52~min~18~s \\
NPBS + $\mathcal{L}_{\mathrm{bs}}^{\ell_2}$ & $8.99\times 10^{-2}$ & $8.90\times 10^{-2}$ & 52~min~35~s \\
NPBS + $\mathcal{L}_{\mathrm{bs}}^{R_\eta}$ & $9.20\times 10^{-2}$ & $9.04\times 10^{-2}$ & 52~min~39~s \\
\bottomrule
\end{tabular}
\end{table}

\section{Limitations}
\label{app:limitations}

\paragraph{Dimensionality and grid structure.}
Our numerical evaluation focuses on two-dimensional problems on uniform structured grids. We do not consider three-dimensional settings, where data generation, memory use, and neural-preconditioner application may be substantially more expensive. 
We also do not evaluate unstructured meshes or complex geometries.
\paragraph{Theoretical guarantees.}
The spectral clustering and condition-number bounds in Proposition~\ref{prop:equivalence_spectral} rely on $\|G_\eta V_\eta\|_2\le q<1$. For regimes that violate this condition, we do not provide a theoretical convergence or conditioning guarantee.

\paragraph{Applicability to PDE classes.}
NPBS is not suitable for arbitrary PDEs. It relies on a reference--perturbation split $A=L_0-V$ with an invertible reference operator $L_0$, so that the Born-preconditioned form $I-G_0V$ with $G_0=L_0^{-1}$ is well defined. This structure is natural for the Helmholtz, CDR, and Newton linear systems studied here, where a reference operator captures the main part of the PDE and the remaining coefficient or state dependence is treated as a perturbation. PDEs for which no meaningful or stable reference--perturbation split is available fall outside the present framework.

\newpage

\end{document}